\documentclass[12pt]{amsart}
\usepackage{amsmath,amsthm,amsfonts,amssymb,float,mathrsfs,xypic}
\usepackage[margin=1in]{geometry}
\xyoption{arc}

\usepackage{hyperref}
\hypersetup{colorlinks}
\usepackage{color}
\definecolor{darkblue}{rgb}{0,0,0.75}
\definecolor{darkred}{rgb}{0.5,0,0}
\definecolor{darkgreen}{rgb}{0,0.5,0}
\hypersetup{ colorlinks,
citecolor=darkblue,
linkcolor=darkred,
urlcolor=darkblue,
linktocpage}

\subjclass[2010]{57R40, 57R19}

\newtheorem{thm}{Theorem}[section]  
\newtheorem*{CWS}{Theorem \cite[Stabilization Theorem]{CW}}
\newtheorem*{thmA}{Theorem A}
\newtheorem*{thmB}{Theorem B}
\newtheorem{lem}[thm]{Lemma}         % Numbered along with thm
\newtheorem{prop}[thm]{Proposition}  

\theoremstyle{definition}
\newtheorem{dfn}[thm]{Definition}   % Numbered along with thm
\newtheorem{rmk}[thm]{Remark}
\newtheorem{rmks}[thm]{Remarks}

\newtheorem*{clm}{Claim}

\DeclareMathOperator*{\holim}{holim}
\DeclareMathOperator*{\hocolim}{hocolim}

%\DeclareMathAlphabet{\mathscr}{OT1}{pzc}{m}{it}
\newcommand{\ul}{\underline}
\newcommand{\nd}{\noindent}
\newcommand{\X}{\mathscr{X}}

\begin{document}\
\title{On the Stabilization of Embedded Thickenings}
\date{\today} 
\author{John W. Peter} 
\address{Utica College \\
Utica, NY 13502} 
\email{jwpeter@utica.edu}

\begin{abstract}  
We define a space of relative embedded thickenings of a map from a finite complex to a Poincar\'e Duality space, and show that fiberwise suspension induces a highly connected \textit{stabilization map} between such spaces. As a result, we obtain a generalization to the Poincar\'e Duality category of a smooth stabilization theorem of Connolly and Williams.

\end{abstract}

\maketitle

\setlength{\parindent}{15pt}
\setlength{\parskip}{1pt plus 0pt minus 1pt}

\newcommand{\T}{\mathcal{T}}
\newcommand{\E}{\mathbb{E}}
\def\:{\colon\thinspace}
\def\cal{\mathcal}
\newcommand{\rel}{\textit{rel}}
\newcommand{\SW}{\mathbb{S}\mathbb{W}}
\newcommand{\US}{\mathcal{S}}
\newcommand{\RS}{\Sigma}
\newcommand{\UO}{\mathcal{O}}
\newcommand{\D}{\mathcal{D}}
\newcommand{\conn}{\operatorname{conn}}
\newcommand{\hofiber}{\operatorname{hofiber}}
\newcommand{\cofiber}{\operatorname{cofiber}}

\setcounter{tocdepth}{1}
\tableofcontents
\addcontentsline{file}{sec_unit}{entry}

%%%%%%%%%%%%%%%%%%%%%%%%%%%%%%%%%%%%%%%%%%%%%%%%%%%%%%%%%
%%%%                                                Introduction                                                                                %%%%%
%%%%%%%%%%%%%%%%%%%%%%%%%%%%%%%%%%%%%%%%%%%%%%%%%%%%%%%%%
\section{Introduction \label{intro}}

A \textit{smooth embedding up to homotopy} of a finite complex $K$ in a manifold $M$ is given by a pair $(h,N)$ such that $N \subset M$ is a compact, codimension zero, smooth submanifold and $h: K \xrightarrow{\simeq} N$ is a homotopy equivalence. A natural question to ask is ``When does a smooth embedding up to homotopy desuspend?". Specifically, in the case that $M = S^n$, one would like to know when a smooth embedding up to homotopy of $\Sigma K$ in $S^{n+1}$ is induced by a smooth embedding up to homotopy of $K$ in $S^n$. A partial answer to this question was given by Connolly and Williams in \cite{CW} in the case that $K$ is $1$--connected. In particular, call two smooth embeddings up to homotopy $(h_0, N_0)$ and $(h_1, N_1)$ of $K$ in $S^n$ \textit{concordant} if

\begin{itemize}
\item There is an $h$--cobordism $W \subset S^n \times I$ between $N_0$ and $N_1$
\item There is a homotopy equivalence $H: K \times I \xrightarrow{\simeq} W$ extending $h_0$ and $h_1$
\end{itemize}

\noindent Writing $E(K,S^n)$ for the set of such concordance classes, Connolly and Williams prove the following: 

\begin{CWS}
\label{thm:CWS}
Assume $K$ is a homotopy finite, $r$--connected $(r \geq 1)$ complex of dimension $k \leq n-3$, $n \geq 6$. Further, assume that $k-r \leq 2$ for $n \leq 7$. Then the natural suspension map 
$$
\Sigma \: E(K,S^n) \rightarrow E(\Sigma K,S^{n+1})
$$ 

\noindent is surjective for $n\geq 2(k-r)$ and injective for $n \geq 2(k-r)+1$.
\end{CWS}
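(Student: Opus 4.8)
The plan is to promote the statement to a connectivity assertion about a map of spaces. First I would upgrade $\pi_0(E_n(K))$ to the set of path components of a space $E_n(K)$ of embedded thickenings of $K$ in $S^n$, with concordances realized by paths, and realize the suspension as an honest map of spaces
\[
\Sigma\colon E_n(K)\longrightarrow E_{n+1}(\Sigma K),\qquad (h,N)\longmapsto (\Sigma h, N'),
\]
where $N'\subset \Sigma S^n = S^{n+1}$ is a regular neighbourhood of the suspended submanifold $\Sigma N$; on normal data this is the fibrewise suspension of the thickening. It then suffices to prove that $\Sigma$ is $\bigl(n-2(k-r)\bigr)$--connected: a $0$--connected map is surjective on $\pi_0$, which gives surjectivity of $\Sigma$ for $2(k-r)\leq n$, while a $1$--connected map is bijective on $\pi_0$, which gives injectivity (indeed bijectivity) of $\Sigma$ for $2(k-r)\leq n-1$.

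To access $E_n(K)$ I would use the smoothing--theory square. A compact codimension--zero $N\subset S^n$ with $h\colon K\xrightarrow{\simeq} N$ is the same data as a smooth Poincar\'e embedded thickening of $K$ in $S^n$: a complement $C$, a common boundary $\partial N\simeq\partial C$ with compatible Poincar\'e--pair structures, a gluing $N\cup_\partial C\simeq S^n$, and a smoothing of the complement. This produces a homotopy fibre sequence
\[
\mathcal F_n(K)\longrightarrow E_n(K)\longrightarrow E_n^{\mathrm{PD}}(K),
\]
in which $E_n^{\mathrm{PD}}(K)$ is the purely homotopy--theoretic space of Poincar\'e embedded thickenings and $\mathcal F_n(K)$ is a surgery structure space built from the complement pair, and the suspension induces a map of such fibre sequences (for $(n,K)$ and $(n+1,\Sigma K)$). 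The homotopy type of $E_n^{\mathrm{PD}}(K)$ is governed by a rank $(n-k)$ vector--bundle reduction of the Spivak normal fibration of $K$ together with the homotopy type of the complement $C$ --- which is $(n-k-1)$--dual to $K$ by Alexander/Spanier--Whitehead duality --- and the Poincar\'e gluing; suspension acts through fibrewise suspension on each piece.

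Now for the two connectivity estimates. On the Poincar\'e side, a Poincar\'e embedded thickening of the $r$--connected $k$--complex $K$ in $S^n$ should desuspend in the metastable range: replacing $K$ by a complex with cells only in dimensions $r+1,\dots,k$, so that every occurrence of the dimension $k$ is improved to the effective dimension $k-r$, a Freudenthal/Blakers--Massey argument applied to the fibrewise suspension of the Spivak data, the dual complement, and the gluing shows the Poincar\'e stabilization map to be $\bigl(n-2(k-r)\bigr)$--connected. On the smoothing side, $\mathcal F_n(K)$ is assembled from surgery structure spaces of the complement pair, and the suspension changes the smoothing datum of the complement by a stabilization map; the hypotheses $n\geq 6$, $k\leq n-3$ (codimension $\geq 3$, so that $\pi_1$ of the pieces behaves well) and the extra condition $k-r\leq 2$ when $n\leq 7$ are exactly what put the relevant surgery obstruction groups in the stable range (via Wall's $\pi$--$\pi$ theorem and Morlet--Burghelea--Lashof type stability), so this map is at least $\bigl(n-2(k-r)\bigr)$--connected as well. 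Applying the five lemma to the map of fibre sequences then shows $\Sigma$ is $\bigl(n-2(k-r)\bigr)$--connected, and passing to $\pi_0$ and $\pi_1$ gives the theorem.

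The main obstacle is the Poincar\'e--side estimate, and specifically making the Blakers--Massey argument precise \emph{with the exact constant} $n-2(k-r)$: one must verify that the $r$--connectivity of $K$ cleanly upgrades every ``$k$'' to ``$k-r$'' uniformly across the Spivak reduction, the Alexander--dual complement, and --- the delicate point --- the homotopy equivalence $N\cup_\partial C\simeq S^n$, which has to be carried along during the excision without degrading the range. Once this numerical range is pinned down, the smoothing/surgery side is comparatively routine.
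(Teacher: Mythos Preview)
This statement is a quoted theorem of Connolly--Williams, not something the present paper proves directly; the paper's route to it is indirect, via Remark~\ref{rmk:1.2}: first establish the Poincar\'e analog (Theorem~B), using the full machinery of Sections~3--5, and then invoke the Browder--Casson--Sullivan--Wall theorem to pass from PD embeddings to smooth ones. Your overall architecture --- separate the problem into a Poincar\'e-side estimate and a smoothing/surgery-side estimate and combine them --- is therefore in the same spirit as the paper's, with BCSW playing the role of your ``smoothing side'' and Theorem~A/B playing the role of your ``Poincar\'e side''.

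The genuine gap is the Poincar\'e-side estimate. You describe it as ``a Freudenthal/Blakers--Massey argument applied to the fibrewise suspension of the Spivak data, the dual complement, and the gluing'', and you flag the gluing as the delicate point. That is exactly right, but it is not a routine excision: a Poincar\'e embedded thickening is not just a bundle reduction plus a dual complement, it is a \emph{stratification} $K\cup_A C\simeq X$ subject to Poincar\'e conditions on both pieces, and desuspending such a stratification requires simultaneously desuspending the gluing space $A$ over $K$ \emph{and} over the complement $C$, compatibly. The paper's entire technical core is devoted to this: the $4$--dimensional Face Theorem (Theorem~\ref{thm:4D}) is precisely the higher-excision statement needed to control a $2$--face of a $4$--cube built from the two candidate destabilizations, and the section-data constructions of Lemmas~\ref{lem:sectionC} and~\ref{lem:sectionK} (which themselves use fiberwise Freudenthal, Lemma~\ref{lem:Freudenthal}, and a cartesian estimate for $\Omega\Sigma$, Lemma~\ref{lem:hofiber}) are what allow one to assemble that $4$--cube in the first place. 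A single Blakers--Massey step will not produce the compatible section data on both sides, and without that you cannot form the cube whose $2$--face gives the destabilized embedding. In short: you have correctly located the obstacle, but the paper shows that overcoming it requires substantially more than an excision estimate --- and indeed even with that machinery the paper only obtains connectivity $n-2(k-r)-3$, not the $n-2(k-r)$ you claim.

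On the smoothing side your sketch is closer to workable: in codimension $\ge 3$ with $n\ge 6$, BCSW (or equivalently the $\pi$--$\pi$ theorem applied to the complement pair) does give the comparison you need, and this is exactly what Remark~\ref{rmk:1.2} invokes. But note that the paper's route dispenses with the hypotheses $n\ge 6$ and $k-r\le 2$ for $n\le 7$ precisely because it does \emph{not} run surgery stability arguments of Morlet--Burghelea--Lashof type; those extra hypotheses in the original Connolly--Williams theorem are artifacts of their surgery-theoretic proof, and your proposal would inherit them.
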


In this paper we generalize the result above to the Poincar\'e Duality category. Our generalization will follow from a Freudenthal--like stabilization theorem for our analogs of smooth embeddings up to homotopy, which we call \textit{embedded thickenings}. Roughly, an embedded thickening is specified by a map $f\:K \rightarrow X$ from a finite complex $K$ to a Poincar\'e Duality space $X$, along with a pair of spaces $(C,A)$ and a rule for gluing $K$ and $C$ together along $A$ to form $X$, up to homotopy. In the event that $X$ has a boundary $\partial X$, we define a \textit{relative embedded thickening} for a map of pairs of spaces $f\:(K,L) \rightarrow (X, \partial X)$ by specifying gluing data, similar to that given above, along with the assumption that an embedded thickening for the map $L \rightarrow \partial X$ is given. We defer the details to Definitions \ref{dfn:PDembedding} and \ref{dfn:relPDembedding} below. 

\subsection*{Main Results}
A pair $(K,L)$ satisfies $\dim(K,L) \leq k$ (or $\dim(K)\leq k$ when $L=\emptyset$) if $K$ can be obtained from $L$, up to homotopy, by attaching cells of dimension at most $k$ (that is, $(K,L)$ is weak equivalent, relative to $L$, to a relative CW complex of relative dimension at most $k$). We call $(K,L)$ \textit{homotopy finite} when $\dim(K,L)$ is finite. Let $\US_X$ denote the unreduced fiberwise suspension functor (defined below).  In what follows, we define a moduli space $\E_f(K,X\ \rel\ L)$ of relative embedded thickenings of a given map $f\:(K,L) \rightarrow (X, \partial X)$ of homotopy finite pairs, along with a \textit{stabilization map}
\[
\sigma\: \E_f(K,X\ \rel\ L) \rightarrow \E_{\US_Xf}(\US_XK, X \times D^1\ \rel\ \US_X L).
\]

\noindent Our main result concerns the connectivity of the stabilization map:

\begin{thmA}[Stabilization]
\label{thmA}
Let $f\:(K,L) \rightarrow (X,\partial X)$ be a map from a cofibration pair of homotopy finite spaces $(K,L)$, with $\dim(K,L) \leq k$, to a Poincar\'e Duality pair $(X, \partial X)$ of dimension $n$. Assume that $f\:K \rightarrow X$ is $r$--connected $(r \geq 1)$ and that $k \leq n-3$. Then the stabilization map 
\[
\sigma\: \E_f(K,X\ rel\ L) \rightarrow \E_{\US_Xf}(\US_XK, X \times D^1\rel\ \US_X L)
\] 
\noindent is $(n-2(k-r)-3)$--connected.   
\end{thmA}

\begin{rmks} (1) In the case that $K$ and $X$ are smooth manifolds, we address in \cite{KP} the extent to which a smooth embedding up to homotopy
\[
f_1\: K\to X\times D^1
\]
\textit{compresses} into a smooth embedding up to homotopy of $K$ into $X$. Theorem \hyperref[thmA]{A} plays a key role in showing that a compression exists upon the vanishing of a certain obstruction, defined using fiberwise homotopy theory, that lives in the cohomology group (cf \cite[Corollary 6.2]{KP}) 
\[
H^{2k}(K\times K; \pi_{2k-n+1}(f)\otimes\mathbb{Z}[\pi_1(X)]).
\]
Such an obstruction group appears in the work of Habegger (cf \cite{Ha}), where PL intersection theory was used to study embeddings up to homotopy. 
  \\ (2) Theorem \hyperref[thmA]{A} serves as a classification tool for embedded thickenings for $f$. In a future paper we will construct another space, $\SW_f(K,X\ \rel\ L)$, which can be thought of as a kind of moduli space of unstable fiberwise duals of embedded thickenings with underlying map $f\:(K,L) \rightarrow (X, \partial X)$. We will then establish the existence of a \textit{classification map} (cf \cite[Page 386]{CW})
\[
\theta:\E_f(K,X\ \rel\ L) \rightarrow \SW_f(K,X\ \rel\ L)
\] 
\noindent and use Theorem \hyperref[thmA]{A} to prove that this map is highly--connected. This will pave the way to an obstruction theory for the existence of embedded thickenings which, in turn, will allow us to provide enumeration results and to compute the space $\E_f(K,X\ \rel\ L)$ in some special cases.
\end{rmks}

\noindent To formulate our next main result, note that in the case $X = S^n$ and $L = \emptyset$, there is an evident ``collapse" map 
\[
\E_{\US_{S^n} f}(\US_{S^n} K, S^n\times D^1\ \rel\ \US_{S^n} \emptyset) \xrightarrow{c} \E_{\US f}(\US K,S^{n+1})
\]
\noindent where $\US$ is the usual unreduced suspension functor. Composition with the stabilization map gives the suspension map
\[
\E_f(K,S^n) \xrightarrow{c \circ \sigma} \E_{\US f}(\US K,S^{n+1}).
\] 
\noindent We will show that the map $c$ above induces a $\pi_0$ surjection and, as a consequence of Theorem \hyperref[thmA]{A}, obtain the desired generalization of the Stabilization Theorem of Connolly and Williams:

\begin{thmB}
\label{thmB}
Let $K$ be a homotopy finite complex with $\dim(K)\leq k \leq n-3$. Assume that $f\:K \rightarrow S^n$ is an $r$--connected map of spaces, $r \geq 1$. Then the induced map 
\[
\pi_0(c \circ \sigma)\: \pi_0(\E_f(K,S^n)) \rightarrow \pi_0(\E_{\US f}(\US K,S^{n+1}))
\]

\noindent is surjective for $n \geq 2(k-r)+3$ and injective for $n \geq 2(k-r)+4$.
\end{thmB}

\begin{rmk} \label{rmk:1.2} In contrast to the surgery--theoretic methods used by Connolly and Williams in \cite{CW}, the proofs of our theorems are purely homotopy--theoretic and, in particular, manifold--free. This allows us to dispense with the extra assumptions found on $n$ in the Stabilization Theorem of Connolly and Williams. We wish to emphasize that Theorem \hyperref[thmB]{B} is a generalization, and not just a PD analog, of the smooth embedding statement of Connolly and Williams. An application of the Browder--Casson--Sullivan--Wall Theorem (\cite[Theorem 5.3]{K3}) allows one to lift the result of Theorem \hyperref[thmB]{B} to the smooth category. This lifting is no small task, and is carried out in \cite{GK2}. In the appendix (Section \ref{sec:appendix} below) we sketch an argument that develops a homotopy fiber sequence relating our spaces of embedded thickenings to the Poincar\'e embedding spaces found in \cite{GK1} and \cite{GK2}.
\end{rmk}

\subsection*{Preliminaries}
\subsection*{Notation and Conventions} Let $\T$ denote the category of compactly-generated topological spaces with the Quillen model structure \cite{Q} based on weak homotopy equivalences and (Serre) fibrations. Constructions in $\T$, such as products and function spaces, will be understood to be topologized using the compactly--generated topology. In what follows, ``space" will mean ``cofibrant object of $\T$" unless otherwise stated. A space $X$ is $n$--connected if $\pi_i(X) = 0$ for every $i \leq n$ and for every choice of basepoint. In particular, a nonempty space is always $(-1)$--connected and, by convention, the empty space is $(-2)$--connected. A map of spaces $X \rightarrow Y$ is $n$--\textit{connected} if its homotopy fiber, with respect to any basepoint of $Y$, is an $(n-1)$ connected space. A weak equivalence in $\T$ is an $\infty$--connected map.\\
\hspace*{0.25in}The notation $(\overline{Y},X)$ will frequently be used to denote the pair given by the mappnig cylinder $\overline{Y}$ of a given a map $X \rightarrow Y$ with the inclusion of $X$ as $X \times 0$. Note that the pair $(\overline{Y},X)$ is a cofibration pair, ie, the inclusion $X\to\overline{Y}$ is a cofibration in $\T$.\\
\hspace*{0.25in} We will assume that the reader is familiar with homotopy (co)limits. For our purposes, a more--than--sufficient exposition of these constructions can be found in \cite[Section 0]{G}. We will also assume that the reader is familiar with the language of model categories. In particular, we point out that for any object $X$ of a model category, functorial factorization equips $X$ with a cofibrant approximation $X\xrightarrow{\sim} X^c$ and a fibrant approximation $X^f\xrightarrow{\sim} X$. The details can be found in \cite[Chapters 7 and 8]{Hi}.

\subsection*{Poincar\'e Duality Spaces} We use the definition found in \cite[Section 2.1]{K2}: Let $X$ be a homotopy finite space equipped with a local coefficient system $\mathscr{L}$ which is pointwise free abelian of rank one. Let $[X]$ denote a homology class in $H_n(X;\mathscr{L})$. The data $(X, \mathscr{L}, [X])$ equip $X$ with the structure of a \textit{Poinacr\'e Duality space of formal dimension} $n$ if cap product with $[X]$ induces an isomorphism
\[
\cap[X]\:H^*(X,\mathscr{L}) \stackrel{\cong}{\rightarrow} H_{n-*}(X;\mathscr{L} \otimes \mathscr{M})
\]
\noindent for every local coefficient system $\mathscr{M}$. We call $[X]$ the \textit{fundamental class} of $X$ and we will refer to such a space $X$ as a \textit{PD space of dimension} $n$. A cofibration pair $(X,\partial X)$ of homotopy finite spaces along with $\mathscr{L}$ and a class $[X] \in H_n(X,\partial X;\mathscr{L})$ is called a \textit{Poincar\'e Duality pair of formal dimension} $n$ if
\begin{itemize}
\item For all local systems $\mathscr{M}$, there is an induced isomorphism 
\[
\cap[X]\:H^*(X;\mathscr{L}) \stackrel{\cong}{\rightarrow} H_{n-*}(X,\partial X; \mathscr{L} \otimes \mathscr{M})
\]
\item The restriction of $\mathscr{L}$ to $\partial X$ along with the image of the fundamental class $[X]$ under the boundary homomorphism $H_n(X,\partial X; \mathscr{L}) \rightarrow H_{n-1}(\partial X; \mathscr{L})$ equips $\partial X$ with the structure of a PD space of dimension $n-1$.
\end{itemize} 
\noindent We will call such a pair $(X,\partial X)$ a \textit{PD pair of dimension} $n$.

\subsection*{Embedded Thickenings} Let $K \xrightarrow{f} X$ denote a map from a connected, homotopy finite space $K$ to a PD space $X$ or PD pair $(X, \partial X)$ of dimension $n$. The following definitions can be found in \cite[Section 2]{K5}. 
\begin{dfn}
\label{dfn:PDembedding}
An \textit{embedded thickening}\footnote{We use the term ``embedded thickening" to conform to the terminology set forth in \cite{K4}. Such embeddings were formerly called \textit{PD Embeddings} in \cite{K2}.} \textit{of f} is specified by homotopy finite spaces $A$ and $C$ along with a choice of factorization $\partial X \rightarrow C \rightarrow X$ fitting into a commutative diagram
\begin{equation}\tag{$\mathscr{D}$}
\begin{split}
\xymatrix@!0{
A \ar[rr]\ar[dd] & & C \ar[dd] & & \partial X \ar[ll]
\\
\\
K \ar[rr]_{f} & & X
}
\end{split}
\end{equation}
\noindent such that
\begin{itemize}
\item [(i)] (\textit{Stratification}) The square is $\infty$--cocartesian, ie, there is a weak homotopy equivalence of spaces $K \cup_A C \simeq X$.
\item[(ii)] (\textit{Poincar\'e Duality}) The image of the fundamental class $[X]$ under the composite
\[
H_n(X,\partial X) \cong H_n(\overline{X},\partial X) \rightarrow H_n(\overline{X},C) \cong H_n(\overline{K},A)
\]
\noindent equips $(\overline{K},A)$ with the structure of a PD pair and, similarly, the image of $[X]$ with respect to the map $H_n(X,\partial X) \rightarrow H_n(\overline{C}, \partial X \amalg A)$ equips $(\overline{C}, \partial X \amalg A)$ with the structure of a PD pair.
\item[(iii)] (\textit{Weak Transversality}) If $\dim(K) \leq k$, then the map $A \rightarrow K$ is $(n-k-1)$--connected.
\end{itemize}
\end{dfn}
\noindent We call $A$ the \textit{gluing space}, $C$ the \textit{complement}, and $f$ the \textit{underlying map} of the embedded thickening.
\begin{rmk}
\label{rmk:sphericalfibration}
(cf \cite[Remark 2.4]{K2}) If $K$ is a PD space of dimension $k\leq n-3$ then the map $A\to K$ has an $(n-k-1)$--spherical homotopy fiber over any base point (cf \cite[Lemma I.4.3]{B},\cite[Proposition 4.4]{S}) . In this sense, $A\to K$ is the analog of the normal bundle of $K\to X$. This version of embedded thickening was studied systematically by Wall in \cite[Chapter 11]{W}.
\end{rmk}

We can relativize the above as follows. Let $(K,L)$ be a cofibration pair, with $K$ and $L$ homotopy finite, and let $(X,\partial X)$ be a PD pair of dimension $n$. Fix a map $f = (f_K,f_L)\: (K,L) \rightarrow (X, \partial X)$. 

\begin{dfn}
\label{dfn:relPDembedding}
A \textit{relative embedded thickening of} $f$ consists of a commutative diagram of pairs of homotopy finite spaces
\begin{equation}\tag{$\mathscr{E}$}
\begin{split}
\xymatrix@!0{
(A_K,A_L) \ar[rrr]\ar[dd] & & & (C_K,C_L)\ar[dd]
\\
\\
(K,L) \ar[rrr]_f & & & (X, \partial X)
}
\end{split}
\end{equation}
\noindent such that
\begin{itemize}
\item [(i)] (\textit{Stratification}) Each of the associated squares of spaces
\[
\xymatrix@!0{
& & A_K\ar[rr]\ar[dd] & & C_K\ar[dd] & & & & &A_L\ar[rr]\ar[dd] & & C_L\ar[dd]
\\
(\mathscr{D}_K) & & & & & & & (\mathscr{D}_L)
\\
& & K\ar[rr]_{f_K} & & X & & & & &L\ar[rr]_{f_L} & & \partial X
}
\]
\noindent is $\infty$--cocartesian, and the square $\mathscr{D}_L$ is an embedded thickening of $f_L$.
\item[(ii)] (\textit{Poincar\'e Duality}) The image of the fundamental class $[X]$ under the composite
\begin{eqnarray*}
H_n(X,\partial X) \cong H_n(\overline{X},\partial X) &\rightarrow& H_n(\overline{X},\partial X \cup_{C_L}C_K)\\
&\cong& H_n(\overline{K},L\cup_{A_L}A_K)
\end{eqnarray*}
\noindent equips $(\overline{K},L\cup_{A_L}A_K)$ with the structure of a PD pair and, similarly, the image of $[X]$ with respect to the map 
\[
H_n(X,\partial X) \rightarrow H_n(\overline{C_K}, C_L\cup_{A_L}A_K)
\] 
\noindent equips $(\overline{C_K}, C_L\cup_{A_L}A_K)$ with the structure of a PD pair. (Here, coefficients are given by pulling back the given local system on $X$). 
\item[(iii)] (\textit{Weak Transversality}) If $\dim(K,L) \leq k$, then the map $A_K \rightarrow K$ is $(n-k-1)$--connected.
\end{itemize} 
\end{dfn}

\noindent We can picture the given Poincar\'e stratification of the pair $(X,\partial X)$ as follows:

\begin{figure}[H]
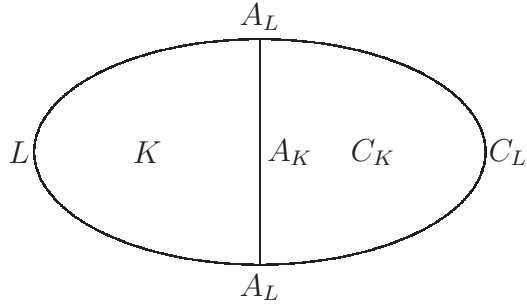

$$
\xy 
{\ellipse(30,15){}},
@={(0,18),(0,-18)},@@{*{A_L}},
@={(-32,0)},@@{*{L}},
@={(-15,0)},@@{*{K}},
@={(4,0)},@@{*{A_K}},
@={(15,0)},@@{*{C_K}},
@={(33,0)},@@{*{C_L}},
@={(0,-15),(0,15)}
, s0="prev" @@{;"prev";**@{-}="prev"}
\endxy
$$
\caption{\small{A PD Decomposition of $(X, \partial X)$}}
\label{fig:strat}
\end{figure}

\begin{rmk}
\label{rmk:concordance}
There is a notion of \textit{concordance} for embedded thickenings (cf \cite[Definition 1.3]{K5}). Let $\mathscr{D}_0$ and $\mathscr{D}_1$ denote embedded thickenings with underlying maps $f_0$,$f_1\: K \rightarrow X$ and suppose that we are given a homotopy $F\: K \times D^1 \rightarrow X$ from $f_0$ to $f_1$. Then we have an associated embedded thickening with underlying map
\[
f_0 \amalg f_1\: K \amalg K \rightarrow \partial(X \times D^1).
\]
\noindent Denote this embedding by $\mathscr{D}_{K \amalg K}$ and consider the associated map of pairs
\[
F\: (K \times D^1, K \amalg K) \rightarrow (X \times D^1, \partial(X \times D^1)).
\]
\noindent A \textit{concordance} from $\mathscr{D}_0$ to $\mathscr{D}_1$ is an embedded thickening of $F$ relative to $\mathscr{D}_{K \amalg K}$.
\end{rmk}

\subsection*{Cubical Diagrams} The language of cubical diagrams will be used throughout. We will make frequent use of the (generalized) Blakers--Massey Theorem and its dual (see \cite[Theorems 2.5, 2.6]{G}). We collect here some of the relevant terminology involving cubical diagrams, all of which can be found in \cite[Section 1]{G}. Let $\ul{n}$ denote the set $\left\{1,\ldots, n\right\}$, and write $P(\ul{n})$ for its power set, partially ordered by inclusion. Regarding $P(\ul{n})$ as a category via this partial ordering, we define an \textit{$n$--cubical diagram in $\T$} (or, briefly, an \textit{$n$--cube}) to be a functor $\X\: P(\ul{n})\rightarrow \T$. The value of the functor $\X$ at the object $S \in P(\ul{n})$ will be denoted by $\X_S$. Let $P_0(\ul{n})$ (resp, $P_1(\ul{n})$) denote the poset of nonempty subsets (resp, the poset of proper subsets) of $P(\ul{n})$. An $n$--cube $\X$ is called \textit{$j$--cartesian} if the map
\[
\X_{\emptyset} \rightarrow \holim_{S\in P_0(\ul{n})}\X_S
\]
is a $j$--connected map of spaces. The $n$--cube $\X$ is \textit{$\infty$--cartesian} if this map is a weak homotopy equivalence. Dually, an $n$--cube $\X$ is called \textit{$j$--cocartesian} if the map
\[
\hocolim_{S\in P_1(\ul{n})}\X_S\rightarrow \X_{\ul{n}}
\]
is a $j$--connected map of spaces. The $n$--cube $\X$ is \textit{$\infty$--cocartesian} if this map is a weak homotopy equivalence. If an $n$--cube is $\infty$--(co)cartesian, we will follow Goodwillie's convention and drop the ``$\infty$" from the notation. Hence, (co)cartesian is the same thing as $\infty$--(co)cartesian.\\
The $n$--cubical diagrams in $\T$ make up the objects of a category $\T^{P(\ul{n})}$ with morphisms given by natural transformations. A morphism $\mathscr{X}\to\mathscr{Y}$ in this category is a weak equivalence if for every $S\subset\ul{n}$, the map $\X_S\to\mathscr{Y}_S$ is a weak homotopy equivalence. With this notion of weak equivalence, the category $\T^{P(\ul{n})}$ carries a projective model structure in which the fibrations are defined levelwise, ie, $\X\to\mathscr{Y}$ is a fibration if for every $S\subset\ul{n}$, the map $\X_S\to\mathscr{Y}_S$ is a fibration in $\T$. For more details, see \cite[Appendix B]{GK1}.

\subsection*{Fiberwise Categories}
For a fixed map $f\: A\rightarrow B$ of spaces, let $\T(A \rightarrow B)$ denote the category whose objects are triples $(i,Y,j)$ such that $Y$ is an object of $\T$, $i\: A \rightarrow Y$ and $j\: Y \rightarrow B$ are morphisms in $\T$, and $j \circ i = f$. A morphism $(i,Y,j) \rightarrow (i',Y',j')$ is a morphism $g\: Y \rightarrow Y'$ in $\T$ such that $g \circ i = i'$ and $j' \circ g = j$. We will typically leave the structure maps out of the notation and let $Y$ denote the object $(i,Y,j) \in \T(A\rightarrow B)$.  The category $\T(A \rightarrow B)$ can be given the structure of a model category (see, eg, \cite[Chapter II, Section 2.8, Proposition 6]{Q}) whose weak equivalences and (co)fibrations are determined by the forgetful functor to $\T$. In particular, an object $(i,Y,j)$ of $\T(A \rightarrow B)$ is fibrant if $j\: Y \rightarrow B$ is a fibration in $\T$, and cofibrant if $i\: A \rightarrow Y$ is a cofibration in $\T$. Note that $\T(\emptyset \rightarrow B)$ is just the category of spaces over $B$, which we will denote by $\T(B)$. The category  $\T(B\xrightarrow{\operatorname{id}} B)$ is the category of retractive spaces (ie, ex-spaces) over $B$, which we will denote by $\mathcal{R}(B)$. An object $Y$ of $\T(A\rightarrow B)$ will be called \textit{$m$--connected} if the structure map $Y \rightarrow B$ is an $(m+1)$--connected map of spaces. The object $Y$ will be called $n$--dimensional if its fibrant approximation $Y^f$ admits a factorization $A \rightarrow Z \xrightarrow{\sim} Y^f$ such that $Z$ is obtained from $A$ by attaching cells of dimension $\leq n$.\\
We will sometimes refer to a \textit{path} between two objects of our categories, by which we mean a (zig--zag of) weak equivalence(s) between them (cf \cite[Section 7.9]{Hi}). After passing to geometric realization, such a weak equivalence gives rise to a sequence of 1--cells which are connected to each other, ie, a path in the associated space.   
\begin{rmk} Most of the categories that we will work with are not small. To avoid set--theoretic difficulties when working with such categories, we fix a Grothendieck universe $\mathcal{U}$ and use only $\mathcal{U}$--sets to form the objects of the category. In particular, we will write $|\mathscr{C}|$ for the geometric realization of the nerve of the (possibly large) category $\mathscr{C}$. This convention is not the only feasible option (see, eg, \cite[Page 9]{GK1}).
\end{rmk}

\subsection*{Fiberwise Suspension}
Given an object $K \in \T(X)$ with structure map $f\: K\to X$, the \textit{unreduced fiberwise suspension of $K$ over $X$} is the double mapping cylinder
\begin{equation*}
\US_X K = X \times -1 \cup_{f \times -1} K \times D^1 \cup_{f \times 1} X \times 1.
\end{equation*}
We regard unreduced fiberwise suspension over $X$ as a functor
\begin{equation*}
\US_X\: \T(X) \rightarrow \T(X \times S^0 \rightarrow X)
\end{equation*}
where $X \times S^0 \rightarrow X$ is the projection. It is straightforward to check that $\US_X$ maps cofibrant objects to cofibrant objects. Using the evident forgetful functor $\T(X \times S^0 \rightarrow X)\to \T(X)$, we may also regard $\US_X$ as an endofunctor of $\T(X)$ and, hence, form its $j$--fold iteration
\[
\US^j_KY = Y\times D^j\cup_{Y\times S^{j-1}}X\times S^{j-1}.
\] 
The functor $\US_X$ admits a right adjoint (cf \cite[Page 320]{K1}) 
\begin{equation*}
\mathcal{O}_X\: \T(X \times S^0\rightarrow X) \rightarrow \T(X)
\end{equation*}
given on objects by 
\begin{equation*}
Y \mapsto \holim(X \xrightarrow{i_+} Y \xleftarrow{i_-} X)
\end{equation*}
where $i_-$ and $i_+$ denote the restrictions of $X \times S^0 \rightarrow Y$ to $X\times -1$ and $X\times 1$, respectively. A straightforward application of the Blakers--Massey Theorem then proves the following:
\begin{lem}
\label{lem:Freudenthal}
Let $Y \in \T(X)$ be a cofibrant object which is $m$--connected. Then there is a morphism
\begin{equation*}
Y \rightarrow \mathcal{O}_X\US_XY 
\end{equation*}
of $\T(X)$ which is $(2m+1)$--connected. 
\end{lem}

\subsection*{Outline} In Section \ref{sec:stabmap} we construct the space of embedded thickenings for a given map $f$, along with the stabilization map. Section \ref{sec:4D} is devoted to stating and proving a technical theorem which we call the ``4D Face Theorem". This theorem is a generalization of the ``Face Theorem" of Klein \cite[Theorem 5.1]{K2} and concerns the degree to which a $2$--dimensional face of a given $4$--dimensional cubical diagram is cocartesian. Section \ref{sec:sections} contains the bulk of the argument for proving Theorem \hyperref[thmA]{A}, using fiberwise homotopy theory to construct sections to structure maps in given relative embedded thickenings. In Section \ref{sec:proofofA}, we use the maps constructed in Section \ref{sec:sections}, along with the 4D Face Theorem, to prove Theorems \hyperref[thmA]{A} and \hyperref[thmB]{B}.

\subsubsection*{Acknowledgments} This main results in this paper represent part of the authors Ph.D. thesis, which was written under the direction of Professor John R. Klein. The author is greatly indebted to Professor Klein for introducing to him and teaching him the methods of fiberwise homotopy theory employed in this paper.

%%%%%%%%%%%%%%%%%%%%%%%%%%%%%%%%%%%%%%%%%%%%%%%%%%%%%%%%%
%%%%                                      The Space of Embedded Thickenings                                                          %%%%
%%%%%%%%%%%%%%%%%%%%%%%%%%%%%%%%%%%%%%%%%%%%%%%%%%%%%%%%%
\section{The Stabilization Map}
\label{sec:stabmap}

We are now equipped to define the space of relative embedded thickenings of a given map $f\: (K,L) \rightarrow (X,\partial X)$. As in Definition \ref{dfn:relPDembedding}, the data for such an embedded thickening can be encoded in a diagram of the form
\begin{equation}\tag{$\mathscr{E}$}
\begin{split}
\xymatrix@!0{
(A_K,A_L) \ar[rrr]\ar[dd] & & & (C_K,C_L)\ar[dd]
\\
\\
(K,L) \ar[rrr]_f & & & (X, \partial X)
}
\end{split}
\end{equation}
It will be useful to think of this diagram (or any diagram that represents a relative embedded thickening) as a $3$--cube:
\begin{equation}\tag{$\mathscr{E}$}
\begin{split}
\xymatrix@!0{
& A_L \ar[rr]\ar[ld]\ar'[d][dd]
& & C_L \ar[dd]\ar[ld]
\\
A_K \ar[rr]\ar[dd]
& & C_K \ar[dd]
\\
& L \ar[rr]\ar[ld]
& & \partial X \ar[ld]
\\
K \ar[rr]
& & X
}
\end{split}
\end{equation}
Let $w\T^{P(\ul{3})}$ denote the category with the same objects as $\T^{P(\ul{3})}$ but with weak equivalences as the only morphisms (recall that a weak equivalence in this case is a levelwise weak homotopy equivalence).  Then with respect to the notational convention above, we define a category $E_f(K,X\ \rel\ L)$ to be the full subcategory of $w\T^{P(\ul{3})}$ with objects those $3$--cubes that represent relative embedded thickenings of a given map $f\: (K,L) \rightarrow (X,\partial X)$. A morphism in $E_f(K,X\ \rel\ L)$ is then represented by a $4$--cube in $\T$: 
\begin{equation*}
\xymatrix@!0{
& A_L \ar[rr]\ar[ld]\ar'[d][ddd]
& & C_L \ar[ddd]\ar[ld]
& & & A_L' \ar[ld]\ar[rr]\ar'[d][ddd]
& & C_L' \ar[ld]\ar[ddd]
\\
A_K \ar[rr]\ar[rr]\ar[ddd]
& & C_K\ar[ddd]
& & & A_K' \ar[rr]\ar[ddd]
& & C_K'\ar[ddd]
\\
& & & \ar[rr]
& & & &
\\
& L \ar[rr]\ar[ld]
& & \partial X \ar[ld]
& & & L\ar[rr]\ar[ld]
& & \partial X\ar[ld]
\\
K \ar[rr]
& & X
& & & K \ar[rr]
& & X
\\
& (\mathscr{E}) & & & & & (\mathscr{E}')
}
\end{equation*}
\begin{dfn}
The \textit{space of relative embedded thickenings of} $f$, denoted by $\E_f(K,X\ \rel\ L)$, is the geometric realization of the nerve of $E_f(K,X\ \rel\ L)$. That is, 
\begin{equation*}
\E_f(K,X\ \rel\ L) = |E_f(K,X\ \rel\ L)|.
\end{equation*}
\end{dfn}
\nd Note that $\pi_0(\E_f(K,X\ \rel\ L))$ is the set of concordance classes of relative embedded thickenings of $f$ (cf Remark \ref{rmk:concordance}).
\begin{rmk}\label{rmk:deltaset} The space of relative embedded thickenings has an alternative description (see, eg, \cite[Appendix 1]{BLR} and \cite[Section 2]{GKW}) as the geometric realization of a certain fibrant $\Delta$-set (ie, simplicial set without degeneracies which satisfies the Kan condition) whose geometric realization has the weak homotopy type of $\E_f(K,X\ \rel\ L)$. For a fixed map $f\: (K,L)\rightarrow (X,\partial X)$ as in Definition \ref{dfn:relPDembedding}, a $j$-simplex of this $\Delta$--set is a commutative square of pairs of $(j+2)$-ads\footnote{For a relevant discussion of $n$--ads, see \cite[Appendix 1]{BLR} or \cite[Section 0]{W}. In particular, we will use the term $(j+2)$-ad synonymously with $(j+1)$-cube.}
\begin{equation*}
\begin{split}
\xymatrix@!0{
(A_K\times\Delta^j,A_L\times\Delta^j) \ar[rrr]\ar[dd] & & & & & (C_K\times\Delta^j,C_L\times\Delta^j)\ar[dd]
\\
\\
(K\times\Delta^j,L\times\Delta^j) \ar[rrr]_<<<{f\times id} & & & & & (X\times\Delta^j, \partial X\times\Delta^j)
}
\end{split}
\end{equation*}

\nd that satisfies the conditions of Definition \ref{dfn:relPDembedding} (suitably adjusted for the language of ads). The $i^{th}$ face map is given by sending the $j$-simplex above to the $(j-1)$-simplex
\begin{equation*}
\begin{split}
\xymatrix@!0{
(A_K\times d_i\Delta^j,A_L\times d_i\Delta^j) \ar[rrrr]\ar[dd] & & & & & & \ \ \ \ (C_K\times d_i\Delta^j,C_L\times d_i\Delta^j)\ar[dd]
\\
\\
(K\times d_i\Delta^j,L\times d_i\Delta^j) \ar[rrrr]_<<<<{f\times id} & & & & & & \ \ \ \ (X\times d_i\Delta^j, \partial X\times d_i\Delta^j)
} 
\end{split}
\end{equation*}
\nd where $d_i\Delta^j$ denotes the $i^{th}$ face of $\Delta^j$. This alternative description of our embedding space allows us to define  $\pi_j(\E_f(K,X\ \rel\ L)) = \pi_0(\E_{f \times id}(K \times D^j, X \times D^j\ \rel\ \US^j_K L))$ (cf \cite[Appendix 1, Section 6]{BLR}).
\end{rmk}
\nd Now, to define the stabilization map, let
\begin{equation}\tag{$\mathscr{D}$}
\begin{split}
\xymatrix@!0{
(A_K,A_L) \ar[rrrr]\ar[dd] & & & & (C_K,C_L)\ar[dd]
\\
\\
(K,L) \ar[rrrr]_f & & & & (X, \partial X)
}
\end{split}
\end{equation}
be an object of $E_f(K,X\ \rel\ L)$. 
Motivated by ``crossing the stratification depicted in Figure \ref{fig:strat} with the interval'' and noting that $\US_X X = X \times D^1$, define a functor
\begin{equation*}
\tilde{\sigma}\: E_f(K,X\ \rel\ L) \rightarrow E_{\US_X f}(\US_X K, X \times D^1\ \rel\ \US_X L)
\end{equation*}
on objects by
\begin{equation*}
\xymatrix@!0{
(A_K,A_L) \ar[rrrr]\ar[dd] & & & & (C_K,C_L)\ar[dd] & & & & (\US_{C_K}A_K,\US_{C_L}A_L)\ar[rrr] \ar[dd] & & & & &\  (C_K \times D^1,C_L \times D^1)\ar[dd]
\\
& & & & &\ \ \ \ \ \  \longmapsto
\\
(K,L) \ar[rrrr]_f & & & & (X, \partial X) & & & & (\US_X K, \US_X L)\ar[rrr]_{\ \ \ \ \ \US_X f} & & & & &\ \  (X \times D^1,\partial(X \times D^1))
\\
& & (\mathscr{D}) & & & & & & & & (\tilde{\sigma}\mathscr{D})
}
\end{equation*}
\begin{dfn}
The \textit{stabilization map} $\sigma$ is defined by applying the geometric realization functor to (the map of nerves induced by) $\tilde{\sigma}$:
\begin{equation*}
\sigma = |\tilde{\sigma}|\: \E_f(K,X\ \rel\ L) \rightarrow \E_{\US_X f}(\US_X K, X \times D^1\ \rel\ \US_X L).
\end{equation*}
\end{dfn}
\begin{rmk}\label{rmk:motivation}
In order to motivate the proof of Theorem \hyperref[thmA]{A}, we recall here the notion of \textit{decompressing} an embedded thickening \footnote{For a detailed account of the decompression construction, see \cite[Definition 2.3]{K5}.}. To this end, fix an object $\mathscr{D} \in E_f(K,X\ \textnormal{rel}\ L)$ (which we will also think of as the corresponding $0$-cell of the space $\E_f(K,X\ \textnormal{rel}\ L)$) and let $f^j$ denote the effect of the map $f\: K \rightarrow X$ followed by the inclusion $X \rightarrow X \times D^j$. Define a functor (called the \textit{decompression} functor)
\begin{equation*}
\tilde{\delta}: E_f(K,X\ \textnormal{rel}\ L) \rightarrow E_{f^1}(K, X \times D^1\ \textnormal{rel}\ L)
\end{equation*}
on objects by sending $\mathscr{D}$ to the relative embedded thickening
\begin{equation}\tag{$\mathscr{\tilde{\delta}D}$}
\begin{split}
\xymatrix@!0{
(\US_K A_K,\US_L A_L) \ar[rrrrr]\ar[dd] & & & & & (\US_X C_K,\US_X C_L)\ar[dd]
\\
\\
(K,L) \ar[rrrrr]_<<{\quad\quad\ \  f^1} & & & & & (X \times D^1, \partial (X \times D^1))
}
\end{split}
\end{equation}
Applying the geometric realization functor to (the map of nerves induced by) $\tilde{\delta}$ gives the \textit{decompression map}
\begin{equation*}
\delta = |\tilde{\delta}|\: \E_f(K,X\ \textnormal{rel}\ L) \rightarrow \E_{f^1}(K, X \times D^1\ \textnormal{rel}\ L).
\end{equation*}
Together, the stabilization and decompression maps give rise to the square
\begin{equation*}\tag{$\mathscr{D_{\sigma,\delta}}$}
\begin{split}
\label{diagram:Dsigmadelta}
\xymatrix@!0{
\E_f(K,X\ \rel\ L) \ar[rrrrrrr]^{\hspace*{-0.5in}\sigma}\ar[dd]_{\delta} & & & & & & & \E_{\US_Xf}(\US_XK, X \times D^1\ \rel\ \US_X L)\ar[dd]^{\delta}
\\
\\
\E_{f^1}(K,X \times D^1\ \rel\ L) \ar[rrrrrrr]_{\hspace*{-0.3in}\sigma} & & & & & & &\E_{\US_Xf^1}(\US_X K, X \times D^2\ \rel\ \US_X L)
}
\end{split}
\end{equation*}
\begin{clm} The square $\mathscr{D}_{\sigma,\delta}$ commutes up to homotopy.
\end{clm}
\nd\textit{Proof of Claim} (Sketch at the level of categories) Consider the diagram of categories whose geometric realization gives the diagram $\mathscr{D}_{\sigma,\delta}$, and let $(C_K,A_K)$ denote the complement and gluing space of a fixed embedded thickening in the upper left corner of $\mathscr{D}_{\sigma,\delta}$. Decompressing and then stabilizing gives rise to an embedded thickening with complement and gluing data \begin{equation*}\label{eqn:gs1}
(\US_{\US_XC_K}\US_XC_K,\US_{\US_XC_K}\US_KA_K)
\end{equation*}
while stabilizing and then decompressing gives rise to an embedded thickening with complement and gluing data 
\begin{equation*}\label{eqn:gs2}
(\US_{\US_XX}\US_{C_K}C_K, \US_{\US_XK}\US_{C_K}A_K)
\end{equation*} 
Since homotopy colimits commute with each other, we obtain isomorphisms between the complements and gluing spaces displayed above, respectively, by taking iterated vertical and horizontal pushouts in the diagrams
\begin{equation*}
\xymatrix@!0{
X & & C_K\ar[ll]\ar[rr] & & X & & & & X & & \ar[ll]_f\ar[rr]^fK & & X
\\
\\
X\ar@{=}[uu]\ar@{=}[dd] & & C_K\ar@{=}[uu]\ar[ll]\ar[rr]\ar@{=}[dd] & & X\ar@{=}[uu]\ar@{=}[dd] & & & & C_K\ar[uu]\ar[dd] & & A_K\ar[ll]\ar[rr]\ar[uu]\ar[dd] & & C_K\ar[uu]\ar[dd]
\\
\\
X & & C_K\ar[ll]\ar[rr] & & X & & & & X & & \ar[ll]^f\ar[rr]_fK & & X
}
\end{equation*}
A similar argument applies to all of the pairs $(X,K)$, $(\partial X,L)$, and $(C_L,A_L)$ associated with the given embedded thickening and, hence, defines an isomorphism of functors $\tilde{\delta} \circ \tilde{\sigma} \cong \tilde{\sigma} \circ \tilde{\delta}$. Passing to geometric realizations then gives the desired homotopy $\delta \circ \sigma \simeq \sigma \circ \delta$.
$\qed$  
\end{rmk}
\begin{rmk}\label{rmk:outline}
The bulk of the proof of Theorem A lies in proving that the square $\mathscr{D_{\sigma,\delta}}$ is $0$--cartesian. To do so, we will assume that we are given a path (in the lower right corner of $\mathscr{D_{\sigma,\delta}}$) between the images of two embedded thickenings 
\begin{equation*}
\xymatrix@!0{
& & (A'_K,A'_L) \ar[rrrr]\ar[dd] & & & & (C'_K,C'_L)\ar[dd(0.8)] & & & & & (A''_K,A''_L)\ar[rrrr]\ar[dd] & & & & & (C''_K,C''_L)\ar[dd]
\\
\\
& & (\US_X K, \US_XL) \ar[rrr(0.9)] & & & & &(X \times D^1,\partial(X\times D^1)) & & & & (K,L) \ar[rrr(0.9)] & & & & & (X \times D^1,\partial(X\times D^1))
\\
& & & & (\mathscr{A}') & & & & & & & & & (\mathscr{A}'')
}
\end{equation*}  
coming from the upper right and lower left corners of $\mathscr{D_{\sigma,\delta}}$, respectively, and show that this path, along with certain ``section data" (cf Section \ref{sec:sections}) gives rise to the ``punctured" $4$--cube
\begin{equation*}\tag{$\mathscr{C}$}
\begin{split}
\xymatrix@!0{
& 
& & & C'_K \ar[ddd]\ar[ld]
& & & C'_K \ar[ld]\ar[rrr]\ar[ddd]
& & & A'_K \ar[ld]\ar[ddd]
\\
K \ar[rrr]\ar[ddd]
& & & X \ar[ddd]
& & & X \ar[rrr]\ar[ddd]
& & & \US_X K\ar[ddd]
\\
& & & & & \xrightarrow{\hspace*{0.5in}}
\\
& K \ar[rrr]\ar[ld]
& & & X \ar[ld]
& & & X \ar[rrr]\ar[ld]
& & & \US_X K\ar[ld]
\\
A''_K \ar[rrr]
& & & C''_K
& & & C''_K \ar[rrr]
& & & \US_{\US_X K} A'_K
}
\end{split}
\end{equation*}
Inserting the homotopy limit of this $4$--cube in the upper left corner will produce a $4$--cube that satisfies the hypotheses of the 4D Face Theorem (cf Section \ref{sec:4D}) and, thus, produce the desired embedded thickening in the upper left corner of $\mathscr{D}_{\sigma,\delta}$. 
\end{rmk}

%%%%%%%%%%%%%%%%%%%%%%%%%%%%%%%%%%%%%%%%%%%%%%%%%%%%%%%%%

\section{The 4D Face Theorem}
\label{sec:4D}

The following theorem is an essential ingredient in the proof of Theorem \hyperref[thmA]{A}, and concerns the degree to which a certain face of a given 4--cube is cocartesian. 

\begin{thm}\label{thm:4D} (4D Face Theorem)
Let $X\: P(\textnormal{\ul{4}}) \rightarrow \T$ be the $4$--dimensional cubical diagram of spaces represented by the commutative diagram
\begin{equation*}
\xymatrix@!0{
& X_{\emptyset} \ar[rr]\ar[ld]\ar'[d][ddd]
& & X_3 \ar[ddd]\ar[ld]
& & & X_4 \ar[ld]\ar[rr]\ar'[d][ddd]
& & X_{34} \ar[ld]\ar[ddd]
\\
X_1 \ar[rr]\ar[rr]\ar[ddd]
& & X_{13}\ar[ddd]
& & & X_{14} \ar[rr]\ar[ddd]
& & X_{134}\ar[ddd]
\\
& & & \ar[rr]
& & & &
\\
& X_2 \ar[rr]\ar[ld]
& & X_{23} \ar[ld]
& & & X_{24}\ar[rr]\ar[ld]
& & X_{234}\ar[ld]
\\
X_{12} \ar[rr]
& & X_{123}
& & & X_{124} \ar[rr]
& & X_{1234} }
\end{equation*}
Assume that 
\begin{itemize}
\item The $4$--cube $X$ is cartesian 
\item The spaces $X_S$ are connected for each nonempty $S \subset \textnormal{\ul{4}}$
\item Each $3$--dimensional face which meets $X_{1234}$ is strongly cocartesian (ie, every $2$--dimensional face is cocartesian)
\item Each map $X_S \rightarrow X_{S \cup \left\{i\right\}}$ is $k_i$--connected for $S$ and $\left\{i\right\}$ nonempty subsets of $\textnormal{\ul{4}}$, $i \notin S$.
\item  $k_i, k_j \geq 2$ for some $i \neq j$.
\end{itemize}
Then each of the squares 
\begin{equation*}
\xymatrix@!0{
X_{\emptyset}\ar[rr]\ar[dd] & & X_j\ar[dd]
\\
\\
X_i\ar[rr] & & X_{ij}
}
\end{equation*}
is $\left(\sum_{i=1}^4k_i-1\right)$-cocartesian for $1\leq i < j \leq 4$.
\end{thm}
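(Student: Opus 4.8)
The plan is to deduce the $4$-dimensional statement from Goodwillie's higher homotopy excision (Blakers--Massey) theorem by a ``restriction'' argument: I will view the $4$-cube $X$ as built from a pair of $3$-cubes and use the strong cocartesianness of the $3$-faces meeting $X_{1234}$ together with the $\infty$-cartesianness of the whole cube to control the total cofiber. First I would recall the relevant form of Goodwillie's theorem: if an $(m)$-cube is strongly cocartesian and the initial maps in direction $i$ are $k_i$-connected, then the cube is $\bigl(\sum k_i - (m-1)\bigr)$-cartesian; and dually there is the blakers--Massey statement comparing how cartesian a cube is to how cocartesian it is. Since here $X$ is already $\infty$-cartesian, the content is to push cocartesianness of the small $3$-faces (those meeting $X_{1234}$) down to cocartesianness estimates for the $2$-faces emanating from $X_\emptyset$.

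The key steps, in order. (1) Fix $1\le i<j\le 4$ and let $\{a,b\}=\mathbf{4}\setminus\{i,j\}$. Decompose $P(\mathbf 4)$ as the two $3$-subcubes $Y^0$ (those $S$ with $a\notin S$, i.e.\ the ``front'' $3$-cube in direction $a$) and $Y^1$ ($S$ with $a\in S$). The cube $X$ being $\infty$-cartesian is equivalent to the map of $3$-cubes $Y^0\to Y^1$ being $\infty$-cartesian in the appropriate sense, hence each directed face transition is as cartesian as the total cube. (2) Apply Goodwillie's excision/Blakers--Massey theorem to the strongly cocartesian $3$-cubes that meet $X_{1234}$ to conclude each such $3$-cube is $\bigl(\sum_{\ell\neq a}k_\ell-1\bigr)$-cartesian (being strongly cocartesian with initial maps $k_\ell$-connected, $\ell\neq a$); iterate in each of the four ``missing'' directions. (3) Now run the standard ``cocartesian from cartesian'' comparison: the square emanating from $X_\emptyset$ in directions $i,j$ is, up to the connectivity defect coming from the complementary directions $a,b$, as cocartesian as the total $4$-cube is cartesian. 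Concretely, I would iterate the statement that a face of a cube is $N$-cocartesian where $N$ is governed by the cartesianness of the cube minus the connectivities in the collapsed directions, using that $X$ is $\infty$-cartesian so the only loss is $\sum_{\ell\in\{a,b\}}k_\ell$ together with a dimension-shift constant, arriving at the square being $\bigl(\sum_{\ell=1}^4 k_\ell - 1\bigr)$-cocartesian. (4) Throughout, the hypotheses that the $X_S$ are connected for $S\neq\emptyset$ and that $k_i,k_j\ge 2$ for some pair are exactly what is needed to legitimately invoke Goodwillie's theorems (simple connectivity / $\pi_1$-triviality conditions) and to make the intermediate homotopy (co)limits well-behaved, so I would insert these checks where the excision theorems are applied.

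I expect the main obstacle to be step (3): bookkeeping the connectivity defects when trading cartesian for cocartesian across two collapsed directions simultaneously, and verifying that no extra $-1$ or $-2$ sneaks in so that the bound is exactly $\sum k_i - 1$ rather than something weaker. The safest route is probably an inductive one --- first collapse direction $a$ (using that the relevant $3$-faces are strongly cocartesian, or at least highly cocartesian by step (2), to get a $3$-cube whose $2$-face from $X_\emptyset$ has a known cocartesianness), then collapse direction $b$ in the resulting $3$-cube via Klein's Face Theorem \cite[Theorem 5.1]{Kl99} or its iteration --- so that each collapse is a single application of a known result and the arithmetic is additive. A secondary subtlety is that the ``front/back'' $3$-cubes in an arbitrary pair of directions $\{a,b\}$ are not all assumed strongly cocartesian (only those meeting $X_{1234}$ are), so in step (2) I must be careful to only use cocartesianness of the faces that actually meet $X_{1234}$ and obtain cocartesianness of the others as a \emph{conclusion} via the cartesianness of $X$, rather than assuming it.
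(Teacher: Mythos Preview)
Your ``safest route'' at the end --- reduce in two steps, first from the $4$-cube to the $3$-cubes through $X_\emptyset$, then from those $3$-cubes to the $2$-faces via Klein's Face Theorem --- is exactly the architecture of the paper's proof. But the proposal leaves a genuine gap at the first step, and misidentifies where the hypothesis $k_i,k_j\ge 2$ enters.

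The gap is this: you need the $3$-cubes $Y$ \emph{meeting $X_\emptyset$} (not the ones meeting $X_{1234}$) to be $\bigl(\sum_\ell k_\ell\bigr)$-cocartesian. Your step (2) gives cartesian estimates on the \emph{opposite} $3$-cubes $Z$ (those meeting $X_{1234}$), and your step (3) gestures at a ``cartesian-to-cocartesian'' trade, but no such general face theorem exists in the form you invoke. The paper's mechanism is concrete and different: from dual Blakers--Massey (\cite[Theorem 2.6]{Go92}) one gets that the full $4$-cube $X$ is $\bigl(\sum_\ell k_\ell+1\bigr)$-cocartesian. Then one writes $\hocolim(X-X_{1234})$ as a homotopy pushout of $X_{123}\leftarrow \hocolim(Y-X_{123})\rightarrow \hocolim(Z-X_{1234})$, and since $Z$ is strongly cocartesian the right-hand term is weakly equivalent to $X_{1234}$. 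The connectivity of $\hocolim(Y-X_{123})\to X_{123}$ (which is what ``$Y$ is $N$-cocartesian'' means) is then read off from this pushout via \cite[Lemma 5.6(2)]{Kl99}. That lemma, however, has a side condition: one of the maps in the pushout must be $2$-connected. Verifying this is the delicate point --- the paper passes to homotopy fibers over $X_{1234}$ and compares cofibers to show the connectivity of $\hocolim(Y-X_{123})\to X_{1234}$ is exactly $k_4$, hence $\ge 2$. \emph{This} is where $k_i,k_j\ge 2$ is used, not (as you say in step (4)) as a $\pi_1$-hypothesis for Goodwillie's theorems, which have no such requirement. The second reduction, from the $3$-cube $Y$ down to the $2$-face, repeats the same pushout-plus-Lemma-5.6(2) argument one dimension lower, consuming the second $k_j\ge 2$.

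In short: your outline is right, but to make it a proof you must replace the hand-wave in step (3) with the explicit hocolim pushout decomposition and the use of \cite[Lemma 5.6(2)]{Kl99}, and you should relocate the $k_i\ge 2$ hypothesis to the verification of that lemma's $2$-connectedness side condition. (Minor: your Blakers--Massey bound in step (2) is off by one --- a strongly cocartesian $3$-cube with $k_\ell$-connected initial maps is $\bigl(\sum_\ell k_\ell-2\bigr)$-cartesian, not $-1$ --- though the paper does not actually use that estimate.)
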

\begin{rmk}\label{rmk:4Drmk} Following \cite[Notation 1.12]{G}, for $U \subset T \subset \ul{4}$ let
\begin{equation*}
\partial^T_U X = \left\{V \mapsto X(V \cup U)\ :\ V \subset T-U\right\}
\end{equation*}
denote the $(|T|-|U|)$-subcube of $X$. Then $\partial^{\ul{4}}_{\ul{4}-T} X$ is the $|T|$--subcube of $X$ terminating in $X_{1234}$. Suppose each of these $|T|$--subcubes is $k(T)$--cartesian. One can easily check that $\textnormal{min}\left\{\sum_{\alpha}k(T_{\alpha})\right\} = \sum_{i=1}^4k_i-2$, where the minimum is taken over all partitions $\left\{T_{\alpha}\right\}_{\alpha}$ of $\ul{4}$ by nonempty subsets. By the dual Blakers-Massey Theorem \cite[Theorem 2.6]{G} $X$ is $\left(\sum_{i=1}^4k_i+1\right)$--cocartesian. Write $X$ as a map of $3$--cubes $Y \rightarrow Z$. By hypothesis, $Z$ is strongly cocartesian. Let $H_*(X)$ denote the reduced homology of the total cofiber of $X$, and similarly for $Y$ and $Z$. Then $H_n(Z)=0$ for all $n$ and $H_n(X)=0$ for $n \leq \sum_{i=1}^4k_i+1$. From the long exact sequence
\begin{equation*}
\cdots \rightarrow H_n(Z) \rightarrow H_n(X) \rightarrow H_{n-1}(Y) \rightarrow H_{n-1}(Z) \rightarrow \cdots
\end{equation*}
we conclude that $H_n(Y)=0$ for $n \leq \sum_{i=1}^4k_i$. This result motivates the first claim made in the proof below.
\end{rmk}

%%%%%%%%%%%%%%%%%%%%%%%% Proof %%%%%%%%%%%%%%%%%%%%%%%%%%%%%%
\begin{proof}[Proof of 4D Face Theorem] As above, let $X$ denote the $4$--cube. Without loss in generality, assume that all of the maps in $X$ are fibrations. Using Remark \ref{rmk:4Drmk} and an argument similar to that given in the proof of \cite[Theorem 5.1]{K2}, our final hypothesis in the statement of the theorem guarantees that $X_{\emptyset}$ is nonempty and connected.
\begin{clm}\label{clm:3D} Each of the $3$--dimensional subcubical diagrams 
\begin{equation*}
\xymatrix@!0{
& X_{\emptyset} \ar[rr]\ar[ld]\ar'[d][dd]
& & X_k \ar[dd]\ar[ld]
\\
X_i \ar[rr]\ar[rr]\ar[dd]
& & X_{ik} \ar[dd]\ar[dd]
\\
& X_j \ar[rr]\ar[ld]
& & X_{jk} \ar[ld]
\\
X_{ij} \ar[rr]
& & X_{ijk}}
\end{equation*}
is $\sum_{i=1}^4k_i$--cocartesian for $1 \leq i < j < k \leq 4$.
\end{clm}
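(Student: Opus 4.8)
The plan is to reduce the $3$-cube in question to a map of $2$-cubes and to apply the dual Blakers–Massey theorem together with the (ordinary) Blakers–Massey/Goodwillie higher excision estimates, exactly as in the analysis sketched in Remark \ref{rmk:4Drmk} but one dimension lower. Fix $1 \le i < j < k \le 4$, and let $\ell$ be the remaining index of $\textbf{4}$. Write the given $3$-cube, call it $W$, as a map of $2$-cubes $W' \to W''$, where $W'$ is the back face (containing $X_\emptyset$) and $W''$ the front face (containing $X_{ijk}$). First I would record the connectivity data: each edge $X_S \to X_{S \cup \{i\}}$ is $k_i$-connected (and similarly for $j,k$), and by hypothesis the two indices with $k_i, k_j \ge 2$ are available, so at least the hypotheses needed to run a Blakers–Massey-type estimate on the $2$-faces of $W$ are in force. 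Since $W''$ is a $3$-dimensional face meeting $X_{1234}$, it is strongly cocartesian by hypothesis; in particular its total cofiber is weakly contractible, so $\widetilde{H}_*(W'') = 0$.

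Next I would estimate how cartesian $W$ is. By the dual Blakers–Massey theorem \cite[Theorem 2.6]{Go92}, applied to the $3$-cube $W$ whose edges in the three coordinate directions are $k_i$-, $k_j$-, $k_k$-connected, $W$ is $\bigl(\min_{\{T_\alpha\}} \sum_\alpha (c_{T_\alpha}+1)\bigr)$-cocartesian, where $c_T$ is the connectivity of the $|T|$-subface terminating in $X_{ijk}$; the minimum over partitions of $\{i,j,k\}$ by nonempty subsets is realized by the full partition into singletons, giving $W$ is $(k_i + k_j + k_k + 1)$-cocartesian — and since $k_\ell \ge 0$ always (an edge map is at least $0$-connected once the relevant spaces are nonempty and connected, which the second hypothesis supplies), this is $\le \sum_{i=1}^4 k_i + 1$; more to the point, I will just use $W$ is at least $(k_i+k_j+k_k)$-cocartesian, i.e. $\widetilde{H}_n(W) = 0$ for $n \le k_i + k_j + k_k$. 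Then, as in Remark \ref{rmk:4Drmk}, the total cofiber long exact sequence
$$
\cdots \to \widetilde{H}_n(W'') \to \widetilde{H}_n(W) \to \widetilde{H}_{n-1}(W') \to \widetilde{H}_{n-1}(W'') \to \cdots
$$
together with $\widetilde{H}_*(W'') = 0$ gives $\widetilde{H}_n(W') \cong \widetilde{H}_{n+1}(W)$, so $\widetilde{H}_{n}(W') = 0$ for $n \le k_i + k_j + k_k - 1$; in other words the back $2$-face $W'$, which is precisely the square
$$
\xymatrix@!0{
X_\emptyset \ar[rr]\ar[dd] & & X_k \ar[dd] \\ \\ X_i \ar[rr] & & X_{ik}}
$$
has $(k_i+k_j+k_k-1)$-connected total cofiber — but I actually need the full $3$-cube $W$ to be $\sum_{i=1}^4 k_i$-cocartesian, not just this homological statement about a face, so I must push the estimate on $W$ itself.

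To get the sharper statement, I would argue as follows. The issue is that the dual Blakers–Massey bound above only sees $k_i + k_j + k_k + 1$, which is short of $\sum k_i$ by $k_\ell - 1$. The extra connectivity comes from the remaining hypotheses on the ambient $4$-cube $X$: namely, $X$ is $\infty$-cartesian, and the $3$-faces of $X$ meeting $X_{1234}$ are strongly cocartesian. The strategy is to view $W$ as one of the two "$\ell$-faces" of the map-of-$3$-cubes decomposition $X = (Y \to Z)$ used in Remark \ref{rmk:4Drmk}, run the homology bookkeeping there to conclude $\widetilde{H}_n(Y) = 0$ for $n \le \sum_{i=1}^4 k_i$ (this is exactly the conclusion of that remark), and then identify $W$ with $Y$ (or with a face whose total cofiber receives or maps to that of $Y$ with enough connectivity), upgrading the homological vanishing to a statement about cocartesianness via a Hurewicz/van Kampen argument — here the first sentence of the proof, that $X_\emptyset$ is nonempty and connected, is what licenses the passage from homology to homotopy for the total homotopy cofiber. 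I expect the main obstacle to be precisely this last bookkeeping step: correctly matching up which $3$-cube among the faces of $X$ plays the role of $Y$ for each choice of $(i,j,k)$, checking that the strong cocartesianness hypotheses apply to the $3$-faces one needs (they are the faces meeting $X_{1234}$, so this should be automatic), and verifying that the homology estimate transfers to the homotopy total cofiber with no loss — i.e. that the connectivity of $W$ as a cocartesian cube really is $\sum_{i=1}^4 k_i$ and not one less. Everything else is a routine application of \cite[Theorem 2.6]{Go92} and the long exact sequence of a pair of cubes.
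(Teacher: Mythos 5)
Your opening reduction already contains an error: the ``front face'' $W''$ of the $3$--cube $W$ is a $2$--dimensional face with terminal vertex $X_{ijk}$, not a $3$--dimensional face meeting $X_{1234}$, so the strong-cocartesianness hypothesis does not apply to it and you cannot conclude $\widetilde{H}_*(W'')=0$. You partly recover by falling back on the decomposition $X=(Y\to Z)$ of Remark \ref{rmk:4Drmk}, and the homological conclusion $\widetilde{H}_n(Y)=0$ for $n\le\sum_{i=1}^4k_i$ is correct (the face-matching you worry about is harmless by symmetry: for each $(i,j,k)$ the opposite $3$--face, obtained by adjoining the remaining index $\ell$ to every vertex, terminates at $X_{1234}$ and is strongly cocartesian). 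But the step you defer --- upgrading the vanishing of the homology of the total cofiber to the statement that $\hocolim(Y-X_{ijk})\to X_{ijk}$ is $(\sum_{i=1}^4k_i)$--connected, which is what ``$\sum k_i$--cocartesian'' means --- is the entire content of the proof, and the connectivity of $X_{\emptyset}$ does not license it: a relative Hurewicz argument needs control of fundamental groups and local coefficients, which you have not supplied. A telling symptom is that your sketch never uses the hypothesis that $k_i,k_j\ge 2$ for some $i\ne j$; that hypothesis is precisely what is needed at this point.

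The paper's proof works at the space level rather than with total cofibers. It forms the $\infty$--cocartesian square comparing $\hocolim(Y-X_{123})$, $X_{123}$, $\hocolim(Z-X_{1234})\simeq X_{1234}$ (the equivalence holding because $Z$ is strongly cocartesian), and $\hocolim(X-X_{1234})$. The dual Blakers--Massey theorem makes $\hocolim(X-X_{1234})\to X_{1234}$ a $(\sum k_i+1)$--connected map, so by \cite[Proposition 1.5(ii)]{Go92} the right vertical map of that square is $(\sum k_i)$--connected. Then \cite[Lemma 5.6(2)]{Kl99} transfers this connectivity to the left vertical map $\hocolim(Y-X_{123})\to X_{123}$ provided the top horizontal map is $2$--connected, and that $2$--connectivity is established by a homotopy fiber comparison over $X_{1234}$ which uses $k_4\ge 2$ in an essential way. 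To repair your outline you should replace the Hurewicz step with this argument (or an equally careful substitute).
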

\nd\textit{Proof of Claim} : Choose one of the $3$--cubes meeting $X_{\emptyset}$, say
\begin{equation}\tag{$\partial^{\ul{3}}_{\emptyset}X$}
\begin{split}
\xymatrix@!0{
& X_{\emptyset} \ar[rr]\ar[ld]\ar'[d][dd]
& & X_3 \ar[dd]\ar[ld]
\\
X_1 \ar[rr]\ar[rr]\ar[dd]
& & X_{13} \ar[dd]
\\
& X_2 \ar[rr]\ar[ld]
& & X_{23} \ar[ld]
\\
X_{12} \ar[rr]
& & X_{123}}
\end{split}
\end{equation}
It will be enough to prove the claim for this $3$--cube. Recall that $P_1(\ul{n}) = P(\ul{n}) - \ul{n}$. The homotopy colimit of the punctured $4$--cube $X|_{P_1(\ul{4})}$ is the iterated homotopy colimit
\begin{equation*}
\hocolim\left(X_{123} \leftarrow \hocolim_{U\subset P_1(\ul{3})}X_U \rightarrow \hocolim_{U\subset P_1(\ul{4}) - P(\ul{3})}X_U\right)
\end{equation*}
and, thus, we can form the following commutative diagram, in which the square is cocartesian. 
\begin{equation}\tag{2.1}
\begin{split}
\xymatrix@!0{
\hocolim_{U\subset P_1(\ul{3})}X_U \ar[rrrrr]\ar[dd] & & & & & \hocolim_{U\subset P_1(\ul{4}) - P(\ul{3})}X_U\ar[dd]^{\beta}\ar@/^1pc/[rrrddd]^-{\sim}
\\
\\
X_{123}\ar[rrrrr]_-{\gamma}\ar@/_1pc/[rrrrrrrrd] & & & & & \hocolim_{U\subset P_1(\ul{4})}X_U\ar[rrrd]^-{\alpha}
\\
& & & & & & & & X_{1234}
}
\end{split}
\end{equation}
The equivalence labeled above arises from the assumption that every $3$--cube meeting $X_{1234}$ is strongly cocartesian. The canonical map $\alpha$ is  $\left(\sum_{i=1}^4k_i+1\right)$--connected since $X$ is  $\left(\sum_{i=1}^4k_i+1\right)$--cocartesian. Thus, by \cite[Proposition 1.5(ii)]{G} the map $\beta$ is $\left(\sum_{i=1}^4k_i\right)$--connected. Moreover, by hypothesis, the map $X_{123}\rightarrow X_{1234}$ is $k_4$--connected. Hence, another application of \cite[Proposition 1.5(ii)]{G} shows that the bottom horizontal map $\gamma$ is $k_4$--connected. To prove the claim, we have to show that the left vertical map in $(2.1)$ is $\left(\sum_{i=1}^4k_i\right)$-connected. By \cite[Lemma 5.6(2)]{K2} it will be enough to show that the top horizontal map in $(2.1)$ is $2$--connected. To this end, note that each of the spaces in $(2.1)$ admits a map to $X_{1234}$. Taking homotopy fibers of the maps of each space in the square in $(2.1)$ over $X_{1234}$ (with respect to any choice of basepoint in $X_{1234}$) gives rise to the square of homotopy fibers  
 \begin{equation}\tag{2.2}
\begin{split}
\xymatrix@!0{
\hofiber(\hocolim_{U\subset P_1(\ul{3})}X_U\rightarrow X_{1234}) \ar[rrrrrrr]\ar[dd] & & & & & & & \ast \ar[dd]
\\
\\
\hofiber(X_{123} \rightarrow X_{1234})\ar[rrrrrrr] & & & & & & & \hofiber(\hocolim_{U\subset P_1(\ul{4})}X_U\rightarrow X_{1234})
}
\end{split}
\end{equation}
This square is cocartesian (see \cite[Chapter 3]{MV}). Let $s$ denote the connectivity of the space in the upper left corner of $(2.2)$. Then the claim will follow if we can show that $s\geq 1$. To this end, let $C_{top}$ denote the homotopy cofiber of the top horizontal map in $(2.2)$. Then $C_{top}$ is $(s+1)$--connected. Use the $k_4$--connected map $\gamma$ from $(2.1)$ to form the square
\begin{equation}\tag{2.3}
\begin{split}
\xymatrix@!0{
X_{123}\ar[rrrr]\ar[dd]_{\gamma} & & & & X_{1234}\ar[dd]^=
\\
\\
\hocolim_{U\subset P_1(\ul{4})}X_U\ar[rrrr] & & & & X_{1234}
}
\end{split}
\end{equation}
The induced map of horizontal homotopy fibers in $(2.3)$ is then $k_4$--connected. But this induced map is precisely the bottom horizontal map in $(2.2)$. Thus the homotopy cofiber of the bottom horizontal map in $(2.2)$ is $k_4$--connected. Call this cofiber $C_{bottom}$. Since $(2.2)$ is cocartesian, we have a weak equivalence $C_{top} \xrightarrow{\sim} C_{bottom}$. This implies that $s+1 = k_4$. By hypothesis, we may assume that $k_4 \geq 2$, so that $s \geq 1$, and the claim follows. 

Now we prove the statement concerning the degree to which each $2$--face meeting $X_{\emptyset}$ is cocartesian. Choose one of these $2$--faces:
\begin{equation}\tag{$\partial^{\ul{2}}_{\emptyset}X$}
\begin{split}
\xymatrix@!0{
X_{\emptyset}\ar[rr]\ar[dd] & & X_1\ar[dd]
\\
\\
X_2\ar[rr] & & X_{12}
}
\end{split}
\end{equation}
It will be enough to show that $\partial^{\ul{2}}_{\emptyset}X$ is $(\sum_{i=1}^4k_i-1)$--cocartesian. By hypothesis, the $3$--cube $\partial^{\ul{4}}_{\left\{3\right\}}$ is strongly cocartesian. Thus, the face
\begin{equation}\tag{$\partial^{\ul{3}}_{\left\{3\right\}} X$}
\begin{split}
\xymatrix@!0{
X_3 \ar[rr]\ar[dd] & & X_{13}\ar[dd]
\\
\\
X_{23}\ar[rr] & & X_{123}
}
\end{split}
\end{equation}
is cocartesian. As in the claim above, there is a commutative diagram
\begin{equation}\tag{2.4}
\begin{split}
\xymatrix@!0{
\hocolim_{U\subset P_1(\ul{2})}X_U \ar[rrrrr]\ar[dd] & & & & & \hocolim_{U\subset P_1(\ul{3}) - P(\ul{2})}X_U\ar[dd]\ar@/^1pc/[rrrddd]^{\sim}
\\
\\
X_{12}\ar[rrrrr]\ar@/_1pc/[rrrrrrrrd] & & & & & \hocolim_{U\subset P_1(\ul{3})}X_U\ar[rrrd]
\\
& & & & & & & & X_{123}
}
\end{split}
\end{equation}
The theorem follows by verifying that the left vertical map in $(2.4)$ is $\left(\sum_{i=1}^4k_i-1\right)$--connected. The argument is almost identical to the one just given. We omit the details. 
\end{proof}

%%%%%%%%%%%%%%%%%%%%%%%%%%%%%%%%%%%%%%%%%%%%%%%%%%%%%%%%%
\section{Section Data for Embedded Thickenings}
\label{sec:sections}
In this section, we construct the maps necessary to form the punctured $4$--cube $\mathscr{C}$ displayed at the end of Remark \ref{rmk:outline}. First, we need a lemma.
\begin{lem}
\label{lem:hofiber}
Let $g\: X \rightarrow Y$ be a $t$--connected map of connected, based spaces with $\conn(X) = \conn(Y) = s$. Then the square
\begin{equation*}
\xymatrix@!0{
X \ar[rr] \ar[dd]_g & & \Omega\Sigma X \ar[dd]
\\
\\
Y \ar[rr]  & & \Omega\Sigma Y
}
\end{equation*}
is $(s+t)$-cartesian.
\end{lem}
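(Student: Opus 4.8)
The plan is to analyze the square by comparing the horizontal homotopy fibers. The horizontal maps $X \to \Omega\Sigma X$ and $Y \to \Omega\Sigma Y$ are the unit maps of the loop-suspension adjunction, and by the Freudenthal suspension theorem the map $X \to \Omega\Sigma X$ is $(2s+1)$--connected (since $X$ is $s$--connected) and similarly $Y \to \Omega\Sigma Y$ is $(2s+1)$--connected. The square is commutative by naturality of the unit, so to measure how cartesian it is I would identify the induced map on vertical homotopy fibers, or equivalently work with the map of horizontal homotopy fibers $\hofiber(X \to \Omega\Sigma X) \to \hofiber(Y \to \Omega\Sigma Y)$: the square is $c$--cartesian precisely when this map of horizontal hofibers is $c$--connected.

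The key computation is to understand $\hofiber(X \to \Omega\Sigma X)$. This is where I would invoke the James splitting / EHP-type analysis: after looping, $\Omega\Sigma X$ has a well-understood homotopy type, and the homotopy fiber of the unit map is $(2s+1)$--connected with bottom homotopy governed by the smash product $X \wedge X$ (the first James filtration quotient is $\Sigma(X\wedge X)$, so the fiber of $X\to\Omega\Sigma X$ is roughly $\Omega\Sigma(X\wedge X)$--like through a range, hence $2s+1$--connected). Rather than pin down the precise homotopy type, I would only need: (a) both horizontal hofibers are $(2s+1)$--connected, and (b) the natural map between them, induced by $g\wedge g \colon X\wedge X \to Y\wedge Y$ at the level of the bottom cell, is highly connected. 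Since $g$ is $t$--connected and $X, Y$ are $s$--connected, the smash $g \wedge g \colon X \wedge X \to Y\wedge Y$ is $(s+t+1)$--connected (connectivity of a smash of maps adds, shifted by one), and one expects the induced map of James fibers to be $(s+t+1)$--connected as well.

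Then the bookkeeping: the map of horizontal hofibers $F_X \to F_Y$ has $F_X, F_Y$ being $(2s+1)$--connected, and the map is $(s+t+1)$--connected. Note $s + t + 1 \leq 2s + 2$ always fails to be the binding constraint when $t$ is small, so I need to be careful: the relevant connectivity of $F_X \to F_Y$ is $\min(2s+2, s+t+1) $ — no, more precisely, since both spaces are $(2s+1)$--connected, a map between them is automatically $(2s+2)$--connected, and it is also $(s+t+1)$--connected by the smash estimate, so it is $\min(2s+2, s+t+1)$--connected. When $t \leq s+1$ this gives $s+t+1$; in general $\min(2s+2,s+t+1) \geq s+t$ requires no assumption, and in fact equals $s+t+1$ whenever $t \le s+1$. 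The claimed bound is that the square is $(s+t)$--cartesian, i.e. the hofiber map is $(s+t)$--connected, which follows since $\min(2s+2, s+t+1) \geq s+t+1 > s+t$.

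The main obstacle will be step two: making precise and justifying the claim that $\hofiber(X\to\Omega\Sigma X)$ is $(2s+1)$--connected with the map between the two such hofibers being controlled by $g\wedge g$ through the relevant range. I would handle this by appealing to the James construction (or the Blakers--Massey theorem applied to the square obtained from the cofiber sequence / the canonical filtration of $\Omega\Sigma X$), which gives both the connectivity of the fiber and the naturality statement in one stroke; the dual Blakers--Massey / higher excision results of Goodwillie cited earlier in the paper are exactly the tool to turn the cocartesian pushout data (the James filtration quotients) into the needed cartesian estimates. Once that input is in hand, the rest is the elementary connectivity arithmetic indicated above.
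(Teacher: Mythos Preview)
Your strategy---compare the horizontal homotopy fibers $F_X = \hofiber(X\to\Omega\Sigma X)$ and $F_Y$---is a legitimate reformulation, but the decisive step is left unfinished. You acknowledge this yourself: getting the required connectivity of $F_X \to F_Y$ needs a James-type identification of these fibers with smash powers, and you defer that to ``the Blakers--Massey theorem \dots\ the higher excision results of Goodwillie.'' There is also a small slip: Freudenthal makes $X\to\Omega\Sigma X$ a $(2s+1)$-connected map, so $F_X$ is only $2s$-connected, not $(2s+1)$-connected; and a map between $n$-connected spaces is only guaranteed to be $n$-connected, not $(n+1)$- or $(n+2)$-connected. Either way, the ``automatic'' bound coming from the connectivity of $F_X,F_Y$ alone is insufficient once $t>s$, and that is exactly the regime used later (in Lemma~\ref{lem:sectionC} one has $t = s+r+1$ with $r\ge 1$). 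So the James/smash step is not optional, and you have not actually carried it out.

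The paper sidesteps all of this. It writes down the $\infty$-cocartesian $3$-cube whose top face is the suspension pushout for $X$ (namely $X \to \ast$, $\ast \to \Sigma X$) and whose bottom face is the same for $Y$, observes that the square in the lemma is precisely the square of horizontal fibers of this cube, and applies Goodwillie's generalized Blakers--Massey \cite[Theorem~2.5]{Go92} directly. A short tabulation of the cartesian degrees $k(T)$ of the back faces gives that the $3$-cube is $(s+t)$-cartesian, and \cite[Proposition~1.18]{Go92} transfers this to the fiber square. No James model, no smash-power analysis. Since you intended to invoke Goodwillie's excision anyway, the detour through $X\wedge X$ buys nothing and is where your argument currently has its gap.
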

\begin{proof} Recall that for a based space $Z$ with basepoint $\ast$, the homotopy fiber of the inclusion $\ast \hookrightarrow Z$ is the based loop space $\Omega Z$. The square in question is then determined by taking the homotopy fibers of the horizontal maps in the cocartesian $3$--cube
\begin{equation}\tag{$\mathscr{X}$}
\begin{split}
\xymatrix@!0{
& X \ar[rr]\ar[ld]\ar'[d][dd]
& & \ast \ar[dd]\ar[ld]
\\
\ast \ar[rr]\ar[dd]
& & \Sigma X \ar[dd]
\\
& Y \ar[rr]\ar[ld]
& & \ast \ar[ld]
\\
\ast \ar[rr]
& & \Sigma Y
}
\end{split}
\end{equation}
For $T$ a nonempty subset of $\left\{1,2,3\right\}$, let $k(T)$ denote the degree to which the face $\partial^T_{\emptyset}\mathscr{X}$ is cocartesian. Labeling the single points in the top square of $\mathscr{X}$ by $1$ and $3$, and labeling $Y$ by $2$, one can easily check that 
\begin{gather*}
k(\left\{1\right\}) = k(\left\{3\right\}) = s +1\\
k(\left\{1,2\right\}) = k(\left\{2,3\right\}) = t+1\\ 
k(\left\{2\right\}) = t\\
\tag*{\text{and}}k(\left\{1,3\right\}) = k(\left\{1,2,3\right\}) = \infty.
\end{gather*}
By the generalized Blakers-Massey Theorem \cite[Theorem 2.5]{G}, $\mathscr{X}$ is $(1-3+s+t+2) = (s+t)$--cartesian. An application of \cite[Proposition 1.18]{G} completes the proof.
\end{proof}
For the rest of this section, it will be convenient to recall that the fiberwise suspension of a space over itself is equal to taking the product of the space with $D^1$. Thus, as in Remark \ref{rmk:outline}, assume that we are given relative embedded thickenings
\begin{equation*}
\xymatrix@!0{
& & (A'_K,A'_L) \ar[rrrr]\ar[dd] & & & & & (C'_K,C'_L)\ar[dd] & & & & (A''_K,A''_L)\ar[rrr]\ar[dd] & & & & (C''_K,C''_L)\ar[dd]
\\
\\
& & (\US_X K, \US_XL) \ar[rrrr(0.85)]_{\qquad \US_Xf} & & & & & (\US_XX,\US_X\partial X) & & & & (K,L) \ar[rrr(0.85)]_{\hspace*{0.2in}f^1} & & & & (\US_XX,\US_X\partial X)
\\
& & & & (\mathscr{A}') & & & & & & & & &(\mathscr{A}'')
}
\end{equation*}
with $\mathscr{A}'$ and $\mathscr{A}''$ in the upper right and lower left corners, respectively, of the square $\mathscr{D}_{\sigma,\delta}$ of Remark \ref{rmk:motivation}. Recall that we are working relative to a given embedded thickening with underlying map $L\to\partial X$. Thus we may assume that the diagrams $\mathscr{A}'$ and $\mathscr{A}''$ restrict along $\US_XL$ and $L$, respectively, to the diagrams of stabilization and decompression of the given embedded thickening of $\L\to\partial X$. So, according to \cite[Section 4]{K5}, there is a PD pair $(C_L, A_L)$, along with an object $C_K \in \T(C_L \rightarrow X)$ such that $\mathscr{A}'$ and $\mathscr{A}''$ are weakly equivalent to the following embedded thickenings (which we still call $\mathscr{A}'$ and $\mathscr{A}''$).
\begin{equation*}
\xymatrix@!0{
& & (A'_K,\US_{C_L}A_L) \ar[rrrr(0.75)]\ar[dd] & & & & & (\US_{C_K}C_K,\US_{C_L}C_L)\ar[dd] & & & & (A''_K,\US_LA_L)\ar[rrr(0.8)]\ar[dd] & & & & (\US_XC_K,\US_XC_L)\ar[dd]
\\
\\
& & (\US_X K, \US_XL) \ar[rrrr(0.85)]_{\qquad \US_Xf} & & & & & (\US_XX,\US_X\partial X) & & & & (K,L) \ar[rrr(0.825)]_{\hspace*{0.2in}f^1} & & & & (\US_XX,\US_X\partial X)
\\
& & & & (\mathscr{A}') & & & & & & & & &(\mathscr{A}'')
}
\end{equation*}
Applying the decompression map to $\mathscr{A}'$ gives 
\begin{equation*}\tag{$\delta\mathscr{A}'$}
\begin{split}
\xymatrix@!0{
& & (\US_{\US_X K} A'_K, \US_{\US_X L}A'_L)\ar[rrrr]\ar[dd] & & & & & & &(\US_{\US_XX} \US_{C_K}C_K, \US_{\US_XX} \US_{C_L}C_L)\ar[dd(0.8)] 
\\
\\
& & (\US_X K, \US_X L)\ar[rrrrrr(0.9)]_{\hspace*{0.3in}(\US_Xf)^1}  & & & & & & & (\US_X^2X,\US_X^2\partial X) & & & & & 
}
\end{split}
\end{equation*}
and applying the stabilization map to $\mathscr{A}''$ gives
\begin{equation*}\tag{$\sigma\mathscr{A}''$}
\begin{split}
\xymatrix@!0{
(\US_{\US_X C_K} A''_K, \US_{\US_X C_L} \US_LA_L)\ar[rrrrr(0.8)]\ar[dd] & & & & & & & (\US_{\US_X C_K}\US_X C_K,\US_{\US_X C_L}\US_X C_L)\ar[dd]
\\
\\
(\US_X K, \US_X L)\ar[rrrrrr(0.9)]_{\hspace*{0.6in}\US_{\US_XX} f} & & & & & & & (\US_X^2X,\US_X^2\partial X)
}
\end{split}
\end{equation*}
\begin{lem}
\label{lem:sectionC}
Assume that there is a path from $\delta\mathscr{A}'$ to $\sigma\mathscr{A}''$ in the category $E_{\US_Xf^1}(\US_X K,\US^2_X X\ \rel\ \US_X L)$. Assume further that $f\: K \rightarrow X$ is $r$--connected $(r \geq 1)$, $\dim(K,L) \leq k \leq n-3$, and $n \geq 2(k-r) +2$. Then there is a lift
\begin{equation*}
\xymatrix@!0{
& & & A'_K \ar[dd]
\\
\\
C_K\times S^0\ar[rr]\ar@{-->}[rrruu] & & & \US_{C_K}C_K 
}
\end{equation*}
where the map $A_K'\to \US_{C_K}C_K$ is the top horizontal map in $\mathscr{A}'$.
\end{lem}
\proof
Since $f$ is $r$--connected, the natural map $\US_XK \rightarrow X$ is $(r+1) \geq 2$--connected. Hence, every local coefficient system on $\US_XK$ arises by pullback from one on $X$. An easy argument using a relative Mayer-Vietoris sequence (with coefficients in any local system) shows that $\dim(\US_X K, \US_X L) \leq k+1$. So, by definition, the map $A'_K \rightarrow \US_X K$ in $\mathscr{A}'$ is $(n+1)-(k+1)-1 = (n-k-1)$--connected. Since $k \leq n-3$, we know that $n-k-1 \geq 2$ and hence, by \cite[Lemma 5.6 (2)]{K2}, we infer that map $A'_K \rightarrow \US_{C_K}C_K$ is $(r+1)$--connected. Thus, the Blakers-Massey Theorem implies that the square associated with $\mathscr{A}'$
\begin{equation}\tag{$\mathscr{A}'$}
\begin{split}
\xymatrix@!0{
A'_K \ar[rrr]\ar[dd] & & & \US_{C_K}C_K\ar[dd]
\\
\\
\US_X K \ar[rrr]_{\US_Xf} & & & \US_XX
}
\end{split}
\end{equation}
is $(n-k+r-1)$--cartesian. Using Lemma \ref{lem:Freudenthal}, what we have so far assembles into the square  
\begin{equation}\tag{$\mathscr{F}$}
\begin{split}
\xymatrix@!0{
A'_K\ar[rrrrr(0.85)]\ar[dd] & & & & & & \mathcal{O}_{\US_XK}\US_{\US_XK}A'_K\ar[dd(0.9)]
\\
\\
\US_XK \times_{\US_XX} \US_{C_K}C_K \ar[rrrrr(0.7)] & & & & & & & \mathcal{O}_{\US_XK}\US_{\US_XK}(\US_XK \times_{\US_XX} \US_{C_K}C_K)
}
\end{split}
\end{equation}
of objects of $\T(\US_XK)$ with left vertical map $(n-k+r-1)$--connected. Let 
\begin{gather*}
F = \hofiber(A_K' \rightarrow \US_XK)\\
\tag*{\text{and}}F' = \hofiber(\US_XK \times_{\US_XX} \US_{C_K}C_K \rightarrow \US_XK)
\end{gather*}
where the homotopy fibers are taken with respect to any basepoint in $\US_X K$. The spaces $F$ and $F'$ are both $(n-k-2)$--connected, and the map $F \xrightarrow{h} F'$ induced by the square $\mathscr{F}$ is $(n-k+r-1)$--connected. The map $h$ fits into the square of homotopy fibers of $\mathscr{F}$ over $\US_XK$ (with respect to any basepoint of $\US_X K$) which is given by
\begin{equation}\tag{$h\mathscr{F}$}
\begin{split}
\xymatrix@!0{
F \ar[rr]\ar[dd]_h & & \Omega\Sigma F\ar[dd]
\\
\\
F' \ar[rr] & & \Omega\Sigma F'
}
\end{split}
\end{equation}
By Lemma \ref{lem:hofiber}, $h\mathscr{F}$ is $(2n - 2k + r - 3)$--cartesian and, by \cite[Proposition 1.18]{G} so is $\mathscr{F}$. Now, form the composite
\begin{equation}
\US_{\US_XK}(C_K\times S^0) \rightarrow \US_X(C_K\times S^0) = \US_XC_K\times S^0 = \US_{\US_XC_K}\emptyset \rightarrow \US_{\US_XC_K}A''_K \simeq \US_{\US_XK}A'_K
\end{equation}
where the last weak equivalence comes from our assumption that there is a path from $\delta\mathscr{A}'$ to $\sigma\mathscr{A}''$. Apply the functor $\mathcal{O}_{\US_XK}$ and compose with the map from Lemma \ref{lem:Freudenthal} to get
\begin{equation}\label{equation:(4)}
C_K\times S^0\rightarrow  \mathcal{O}_{\US_XK}\US_{\US_XK}(C_K\times S^0) \rightarrow \mathcal{O}_{\US_XK}\US_{\US_XK}A'_K.
\end{equation}
The composite (\ref{equation:(4)}), the square $\mathscr{F}$, and the evident map
\begin{equation*}
C_K\times S^0 = \US_{\US_XX} \emptyset \times_{\US_XX} C_K \simeq \US_{\US_XX} \emptyset \times_{\US_XX}\US_{C_K}C_K \rightarrow S_X K \times_{\US_XX}\US_{C_K}C_K
\end{equation*}
combine to form the following diagram of spaces over $\US_X K$: 
\begin{equation*}
\xymatrix@!0{
C_K\times S^0 \ar@/^1pc/[ddrrrrrrr]\ar@/_/[ddddrr]\ar@{-->}[ddrr]
\\
\\
& & A'_K \ar[rrrrr]\ar[dd] & & & & & \mathcal{O}_{\US_XK}\US_{\US_XK}A'_K\ar[dd(0.8)]
\\
\\
& & S_X K \times_{\US_XX}\US_{C_K}C_K \ar[rrrrr(0.7)] & & & & & & & \mathcal{O}_{\US_XK}\US_{\US_XK}(S_X K \times_{\US_XX}\US_{C_K}C_K)
}
\end{equation*}
By obstruction theory, the dashed arrow exists provided that $\dim(C_K) \leq 2n-2k+r-3$. Using duality and excision, we have isomorphisms 
\begin{equation*}
H^*(C_K) \cong H_{n+1-\ast}(\overline{C_K},A'_K) \cong H_{n+1-\ast}(\overline{\US_XX}, \US_XK).
\end{equation*}
for all local coefficient systems. Since $(\US_X X, \US_XK)$ is an $(r+1)$--connected pair of spaces, the isomorphism above implies  that $C_K$ is cohomologically $(n-r-1)$--dimensional (ie, its cohomology vanishes in degrees $>n-r-1$).  But $r \leq k \leq n-3$, so that $n-r-1 \geq 2$. Hence, by \cite[Proposition 8.1]{GK1}, $\dim(C_K) \leq n-r-1$. Thus, the dashed arrow exists provided that $n-r-1 \leq 2n - 2k + r - 3$, which is equivalent to $n \geq 2(k-r)+2$. This establishes the existence of the map 
\begin{equation*}
C_K \times S^0 \rightarrow A'_K \rlap{\hspace{1.75in} \qedsymbol}
\end{equation*}
We now make a similar construction associated with the square $\mathscr{A}''$.
\begin{lem}
\label{lem:sectionK}
With the assumptions of the previous lemma, there is a lift
\begin{equation*}
\xymatrix@!0{
& & & A''_K \ar[dd]
\\
\\
K\times S^0\ar[rrr]_{\text{proj.}}\ar@{-->}[rrruu] & & & K 
}
\end{equation*}
where the map $A''_K\to K$ is the left vertical map in the diagram $\mathscr{A}''$.
\end{lem}
\proof
This is similar to the proof of the previous lemma. The Blakers-Massey Theorem implies that the square
\begin{equation*}
\xymatrix@!0{
A''_K \ar[rr]\ar[dd] & & \US_X C_K\ar[dd]
\\
\\
K\ar[rr] & & \US_XX
}
\end{equation*}
associated with $\mathscr{A}''$ is $(n-k+r-1)$--cartesian. Thus we have an $(n-k+r-1)$--connected  map
\begin{equation*}
A''_K \rightarrow K \times_{\US_XX} \US_XC_K
\end{equation*}
Using Lemma \ref{lem:Freudenthal}, form the following commutative square of objects in $\T(\US_X C_K)$: 
\begin{equation}\tag{$\mathscr{G}$}
\begin{split}
\xymatrix@!0{
A''_K\ar[rrrrr]\ar[dd] & & & & & \mathcal{O}_{\US_XC_K}\US_{\US_XC_K}A''_K\ar[dd]
\\
\\
K \times_{\US_XX} \US_XC_K \ar[rrrrr(0.5)] & & & & & & \mathcal{O}_{\US_XC_K}\US_{\US_XC_K}(K \times_{\US_XX} \US_XC_K)
}
\end{split}
\end{equation}
Map every space in $\mathscr{G}$ to $\US_XC_K$ and take homotopy fibers (for any choice of basepoint) to get
\begin{equation}\tag{$h\mathscr{G}$}
\begin{split}
\xymatrix@!0{
G = \hofiber(A''_K \rightarrow \US_XC_K) \ar[rrrrrr]\ar[dd]_{h'} & & & & & & \Omega\Sigma G\ar[dd]
\\
\\
G' = \hofiber(K \times_{\US_XX} \US_XC_K \rightarrow \US_XC_K) \ar[rrrrrr] & & & & & & \Omega\Sigma G'
}
\end{split}
\end{equation}
in which $\conn(G) = \conn(G') = r-1$ and $\conn(h') =  (n-k+r-1)$. By Lemma \ref{lem:hofiber}, $h\mathscr{G}$ is $(n - k + 2r - 2)$--cartesian and, hence, so is $\mathscr{G}$. Form the composite
\begin{equation*}
\US_{\US_XC_K}(K\times S^0) \rightarrow \US_X(K\times S^0) = \US_{\US_XK} \emptyset \rightarrow \US_{\US_XK}A'_K \simeq \US_{\US_XC_K}A''_K.
\end{equation*}
in $\T(\US_X C_K)$, where the last weak equivalence comes from the assumption that there is a path from $\delta\mathscr{A}'$ to $\sigma\mathscr{A}''$. As above, apply the functor $\mathcal{O}_{\US_XC_K}$ to get a map
\begin{equation*}
\mathcal{O}_{\US_XC_K}\US_{\US_XC_K}(K\times S^0) \rightarrow \mathcal{O}_{\US_XC_K}\US_{\US_XC_K}A''_K.
\end{equation*}
and compose with the map from $K\times S^0$ provided by Lemma \ref{lem:Freudenthal} to get
\begin{equation*}
K\times S^0 \rightarrow \mathcal{O}_{\US_XC_K}\US_{\US_XC_K}A''_K.
\end{equation*}
Combine this with the map $K\times S^0 = K \times_{\US_XX} \US_{\US_XX} \emptyset \rightarrow K \times_{\US_XX} \US_{\US_XX} C_K \simeq K \times_{\US_XX}\US_XC_K$ and the square $\mathscr{G}$ to form the diagram 
\begin{equation*}
\xymatrix@!0{
K\times S^0 \ar@/^1pc/[ddrrrrrrr]\ar@/_/[ddddrr]\ar@{-->}[ddrr]
\\
\\
& & A''_K \ar[rrrrr]\ar[dd] & & & & & \mathcal{O}_{\US_XC_K}\US_{\US_XC_K}A''_K\ar[dd(0.9)]
\\
\\
& & K \times_{\US_XX} \US_XC_K \ar[rrrrr(0.7)] & & & & & & &\mathcal{O}_{\US_XC_K}\US_{\US_XC_K}(K \times_{\US_XX} \US_XC_K)
}
\end{equation*}
Again, by obstruction theory, the dashed arrow exists provided that $k \leq n-k+2r-2$, which is equivalent to $n \geq 2(k-r)+2$. By hypothesis, we have the desired map
\begin{equation*}
K \times S^0 \rightarrow A''_K \rlap{\hspace{1.75in} \qedsymbol}.
\end{equation*}

%%%%%%%%%%%%%%%%%%%%%%%%%%%%%%%%%%%%%%%%%%%%%%%%%%%%%%%%
\section{Proofs of the Stabilization Theorems \hyperref[thmA]{A} and \hyperref[thmB]{B}}\label{sec:proofofA}
We are finally in a position to determine the conditions under which the square $\mathscr{D}_{\sigma,\delta}$  is $0$--cartesian (cf Remark \ref{rmk:outline}). In the proof of the following lemma, we'll refer to the embedded thickenings $\mathscr{A}'$ and $\mathscr{A}''$ from the previous section.
\begin{lem}
\label{lem:Dsigmadelta}
Assume that $f$ is $r$--connected $(r \geq 1)$. Then the square $\mathscr{D}_{\sigma,\delta}$ is $0$--cartesian provided that $\dim(K,L) \leq k \leq n-3$ and $n \geq 2(k-r)+3$.
\end{lem}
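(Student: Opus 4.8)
The plan is to show that $\mathscr{D}_{\sigma,\delta}$ is $0$-cartesian by the explicit desuspension construction foreshadowed in the remarks preceding this lemma, combining the section data of Section 4 with the $4$-D Face Theorem. Unwinding the definitions, a $0$-cell of the homotopy pullback of the three non-initial corners of $\mathscr{D}_{\sigma,\delta}$ amounts to a pair of objects $\mathscr{A}'\in E_{\US_X f}(\US_X K, X\times D^1\ \rel\ \US_X L)$ and $\mathscr{A}''\in E_{f^1}(K, X\times D^1\ \rel\ L)$ together with a path (zig-zag of weak equivalences) from $\delta\mathscr{A}'$ to $\sigma\mathscr{A}''$ in $E_{\US_X f^1}(\US_X K, X\times D^2\ \rel\ \US_X L)$. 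A map of spaces is $0$-connected precisely when its homotopy fibers are nonempty, so it suffices to produce, for each such triple, one embedded thickening $\mathscr{D}\in E_f(K,X\ \rel\ L)$, relative to the given thickening $\mathscr{D}_L$ of $f_L$, together with a path from $\sigma\mathscr{D}$ to $\mathscr{A}'$ and one from $\delta\mathscr{D}$ to $\mathscr{A}''$ compatible with the given one; producing such paths already at the level of the categories of objects and weak equivalences suffices, since realization is a homotopy functor.

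To construct $\mathscr{D}$, I would first gather the ingredients from Section 4. Writing $\mathscr{A}'=(A'\to W)$ over $(\US_X K\to X\times D^1)$ and, via the normal form \cite[Proposition 4.1]{Kl02b}, $\mathscr{A}''=(A''\to\US_X C'')$ over $(K\to X\times D^1)$ (suppressing pairs and working relative to $\mathscr{D}_L$ as in Section 4), Lemmas \ref{lem:weakequiv} and \ref{lem:equivalences} supply a weak equivalence $\US_X W\xrightarrow{\sim}\US_X C''$ over $X$ and weak equivalences $\US_{\US_X K} A'\simeq\US_{\US_X C''} A''$, while Lemmas \ref{lem:sectionC} and \ref{lem:sectionK} supply the section data $W\times S^0\to A'$ and $K\times S^0\to A''$. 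Under the hypotheses $k\le n-3$ and $n\ge 2(k-r)+3$, the connectivity estimates of Section 4 together with fiberwise Freudenthal (Lemma \ref{lem:Freudenthal}) allow a single desuspension: set $C:=C''$ with its structure map to $X$, identify $W\simeq C$ over $X$, and reconstruct the gluing space $A$ from the section data by an $\UO$-construction, so that $\US_K A\simeq A''$ and $\US_W A\simeq A'$ and the structure maps $A\to K$, $A\to C$ are read off from the equivalences (i) and (ii) of Lemma \ref{lem:equivalences}. This produces a square $\mathscr{D}=(A\to C,\ A\to K,\ f\colon K\to X)$ restricting on $L$ to $\mathscr{D}_L$.

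It then remains to verify conditions (i)--(iii) of Definition \ref{dfn:relPDembedding} for $\mathscr{D}$. Conditions (ii) and (iii) are comparatively formal: the Poincar\'e duality structures on $(\overline K,L\cup_{A_L}A)$ and $(\overline C,\ldots)$ descend from those of $\mathscr{A}''$, an embedded thickening for $f^1$ into the PD pair $(X\times D^1,\partial(X\times D^1))$ of formal dimension $n+1$, via \cite[Proposition 4.1]{Kl02b} and the compatibility of fiberwise suspension with fundamental classes; and weak transversality is numerology, since $A''\to K$ is $(n+1)-k-1=(n-k)$-connected, $A''\simeq\US_K A$, and fiberwise suspension over $K$ raises the connectivity of a structure map by exactly one, so $A\to K$ is $(n-k-1)$-connected. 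The heart of the argument is the stratification condition (i), $K\cup_A C\simeq X$. For this I would organize $\mathscr{D}$, the squares $\delta\mathscr{A}'$ and $\sigma\mathscr{A}''$, and their common stabilization-decompression into a $4$-dimensional cubical diagram having (the square of structure maps of) $\mathscr{D}$ as the $2$-face through the initial vertex and the stabilization and decompression operations as its remaining two cube directions, arranged to satisfy the hypotheses of the $4$-D Face Theorem: the $3$-faces meeting the terminal vertex are strongly cocartesian because stabilization and decompression are built from double mapping cylinders; the edge-maps have connectivities bounded below by the estimates behind Lemmas \ref{lem:sectionC} and \ref{lem:sectionK} (which rest on Blakers--Massey and Lemma \ref{lem:hofiber}); and $\infty$-cartesianness is engineered from the fibrancy of the objects involved. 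Theorem \ref{thm:4D} then shows the initial $2$-face $\mathscr{D}$ is $(\sum_i k_i-1)$-cocartesian, and the hypothesis $n\ge 2(k-r)+3$ --- one more than the bound $n\ge 2(k-r)+2$ needed merely to produce the section data --- is what makes this bound nonnegative. Since $f$ is $r$-connected with $r\ge1$ the relevant spaces are simply connected, and the Poincar\'e duality structures already in hand force $K\cup_A C\to X$ to be an isomorphism on homology with all local coefficients, so by Whitehead's theorem ``highly cocartesian'' upgrades to $\infty$-cocartesian. Tracing the construction then yields the required paths $\sigma\mathscr{D}\to\mathscr{A}'$ and $\delta\mathscr{D}\to\mathscr{A}''$, so $\mathscr{D}$ is a $0$-cell over the chosen point and $\mathscr{D}_{\sigma,\delta}$ is $0$-cartesian.

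The step I expect to be hardest is the stratification condition: assembling the $4$-cube so that the hypotheses of Theorem \ref{thm:4D} genuinely hold --- in particular pinning down that the terminal $3$-faces are strongly cocartesian and that the edge-connectivities can be bookkept so that $\sum_i k_i-1\ge 0$ exactly under $n\ge 2(k-r)+3$ --- and then promoting the resulting finite cocartesian estimate to an honest weak equivalence via Poincar\'e duality. By contrast, reconstructing $A$ and its two structure maps from the section data, and checking the coherence of the various zig-zags, is essentially bookkeeping given Lemmas \ref{lem:Freudenthal}--\ref{lem:sectionK}.
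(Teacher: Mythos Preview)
Your overall strategy is right in spirit --- the section data and the $4$-D Face Theorem are indeed the main ingredients --- but the order of construction is inverted in a way that creates real gaps.

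First, you propose to build $A$ directly ``by an $\UO$-construction'' so that $\US_K A\simeq A''$ and $\US_W A\simeq A'$, and to identify $W\simeq C''$ over $X$. Neither of these desuspensions is available a priori: Lemma~\ref{lem:weakequiv} only gives $\US_X W\simeq\US_X C''$, not $W\simeq C''$, and Lemma~\ref{lem:Freudenthal} supplies a highly connected \emph{unit} $Y\to\UO_X\US_X Y$, not a counit equivalence $\US_X\UO_X Z\to Z$. Nothing in Section~4 produces an $A$ with those properties.

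Second --- and this is the structural point --- you plan to place a pre-built $\mathscr{D}$ at the initial $2$-face of a $4$-cube and then invoke Theorem~\ref{thm:4D}. But that theorem requires the $4$-cube to be $\infty$-cartesian, and a cube with an independently manufactured initial vertex has no reason to be so. The paper runs the argument the other way: it assembles the \emph{punctured} $4$-cube entirely from data already in hand (the squares $\mathscr{A}'$, $\mathscr{A}''$, and the section maps of Lemmas~\ref{lem:sectionC}--\ref{lem:sectionK}, which yield the excision equivalences making the terminal $3$-faces strongly cocartesian), and then \emph{defines} the initial vertex $B$ as the homotopy limit, so that $\infty$-cartesianness holds by construction. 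The Face Theorem then says the $2$-face with corners $B,K,K,A''_K$ is $(2n-2k+2r-3)$-cocartesian.

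Third, the upgrade from ``highly cocartesian'' to ``$\infty$-cocartesian'' is not done by Poincar\'e duality plus Whitehead as you suggest (there is no PD structure on $B$, and the square in question is not a map of PD spaces). The paper instead invokes the Cocartesian Replacement Theorem \cite[Theorem~4.2]{Kl99}, which replaces $B$ by a space $A$ through a $(2n-2k+2r-5)$-connected map so that the square with corners $A,K,K,A''_K$ becomes genuinely $\infty$-cocartesian. The hypothesis $n\ge 2(k-r)+3$ enters precisely here (the section data needed only $n\ge 2(k-r)+2$): it guarantees that the relative cohomology of $K\vee K\to A''_K$ vanishes above the required degree, which is the input Cocartesian Replacement demands. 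The embedded thickening for $f$ is then read off a \emph{different} $2$-face, with corners $A,K,W_K,X$ --- so $W_K$, not $C''_K$, is the complement --- and the verification of Definition~\ref{dfn:PDembedding} is delegated to \cite[Claims~6.5 and~6.6]{Kl99} rather than checked condition by condition.
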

\begin{proof}
Using the projections $C_K \times D^1 \rightarrow C_K$ and $X \times D^1 \rightarrow X$, write the outer squares associated with $\mathscr{A}'$ and $\mathscr{A}''$ as follows:
\begin{equation*}
\xymatrix@!0{
& &  A'_K\ar[rr]\ar[dd] & & C_K\ar[dd] & & & & A''_K\ar[rr]\ar[dd] & & \US_XC_K \ar[dd] 
\\ 
\\
& &  \US_XK \ar[rr] & & X & & & & K \ar[rr] & & X  
}
\end{equation*}
The maps constructed in Lemmas \ref{lem:sectionC} and \ref{lem:sectionK} and the fact that the squares above are cocartesian provide the following weak equivalences:
\[
\tag{$1$}
\begin{split}
\label{eqn:we1}
X \cup_{C_K}A'_K \simeq (\US_XK\cup_{A'_K}C_K)\cup_{C_K}A'_K\simeq \US_XK\cup_{A'_K}A'_K\simeq\US_XK
\end{split}
\]
\[
\tag{$2$}
\begin{split}
\label{eqn:we2}
X \cup_KA''_K \simeq (\US_XC_K\cup_{A''_K}K)\cup_KA''_K\simeq \US_XC\cup_{A''_K}A''_K\simeq\US_XC_K
\end{split}
\]
\begin{eqnarray}\label{eqn:we3}
\US_X C_K \cup_X \US_X K &=& X \cup_{C_K} X\cup_X \US_X K\\ 
                                                                                  &\simeq& X \cup_{C_K} \US_X K \nonumber\\
                                                                                    &\simeq& X \cup_{C_K}A'_K\cup_{A'_K}\US_X K \nonumber\\ 
                                                                                   &\simeq& (\US_XK\cup_{A'_K}C_K)\cup_{C_K}A'_K\cup_{A'_K}\US_X K \nonumber\\
                                                                                   &\simeq& \US_XK\cup_{A'_K}A'_K\cup_{A'_K}\US_X K \nonumber\\                                                                                   
                                                                                   &\simeq& \US_XK \cup_{A'_K} \US_XK \nonumber\\
                                                                                  &=&\US_{\US_XK}A'_K \nonumber
\end{eqnarray}
%and hence the cocartesian squares
%\begin{equation}\tag{4}
%\begin{split}
%\xymatrix@!0{
%C'_K\ar[rr]\ar[dd] & & A'_K\ar[dd] & & & K\ar[rr]\ar[dd] & & A''_K\ar[dd] & & & X\ar[rrr]\ar[dd] & & &\US_XK\ar[dd] 
%\\
%\\
%X\ar[rr] & & \US_XK & & & X\ar[rr] & & \US_XC_K & & & \US_XC_K\ar[rr] & & &\US_{\US_XK}A'_K
%}
%\end{split}
%\end{equation}
What we have so far can be combined to form the following punctured $4$--cube, in which the weak equivalences (\ref{eqn:we1}), (\ref{eqn:we2}), and (\ref{eqn:we3}) above imply that every 2--dimensional face that meets $\US_{\US_XK}A'_K$ is cocartesian.
\begin{equation*}
\xymatrix@!0{
& 
& & & C_K \ar[ddd]\ar[ld]
& & & C_K \ar[ld]\ar[rrr]\ar[ddd]
& & & A'_K \ar[ld]\ar[ddd]
\\
K \ar[rrr]\ar[ddd]
& & & X \ar[ddd]
& & & X \ar[rrr]\ar[ddd]
& & & \US_X K\ar[ddd]
\\
& & & & \ar[rr]
& & & &
\\
& K \ar[rrr]\ar[ld]
& & & X \ar[ld]
& & & X \ar[rrr]\ar[ld]
& & & \US_X K\ar[ld]
\\
A''_K \ar[rrr]
& & & \US_X C_K
& & & \US_X C_K \ar[rrr]
& & & \US_{\US_X K} A'_K
}
\end{equation*}
Let $B$ denote the homotopy limit of this punctured $4$--cube, so that we have a cartesian $4$--cube
\begin{equation*}
\xymatrix@!0{
& B \ar[rrr]\ar[ld]\ar'[d][ddd]
& & & C_K \ar[ddd]\ar[ld]
& & & C_K \ar[ld]\ar[rrr]\ar[ddd]
& & & A'_K \ar[ld]\ar[ddd]
\\
K \ar[rrr]\ar[ddd]
& & & X \ar[ddd]
& & & X \ar[rrr]\ar[ddd]
& & & \US_X K\ar[ddd]
\\
& & & & \ar[rr]
& & & &
\\
& K \ar[rrr]\ar[ld]
& & & X \ar[ld]
& & & X \ar[rrr]\ar[ld]
& & & \US_X K\ar[ld]
\\
A''_K \ar[rrr]
& & & \US_X C_K
& & & \US_X C_K \ar[rrr]
& & & \US_{\US_X K} A'_K
}
\end{equation*}
\begin{rmk}
To avoid technical difficulties, we will assume that we have mapped the original punctured cube to a new punctured cube by a pointwise weak equivalence, and that the limit of the new punctured cube is the homotopy limit of the original punctured cube. The new punctured cube, together with its limit, is a strictly commutative cube. Hence, we will assume that the $4$--cube above is strictly commutative, but will keep the notation as is.
\end{rmk}
\nd To complete the proof of the lemma, we wish to apply the 4D Face Theorem to this cube, so we check its hypotheses. As noted above, the cube is cartesian. Since every $2$--dimensional face which meets $\US_{\US_X K} A'_K$ is cocartesian, \cite[Definition 2.5]{G} implies that every $3$--dimensional face which meets $\US_{\US_X K} A'_K$ is strongly cocartesian. A quick check shows that, in the notation of the 4D Face Theorem, $k_1 = k_2 = n-k-1$ and $k_3 = k_4 = r$. In particular, since $k \leq n-3$, we have $k_1,k_2 \geq 2$. Hence, the theorem applies and as a consequence, we have that $B$ is connected and that the square
\begin{equation*}\tag{$\mathscr{B}$}
\begin{split}
\xymatrix@!0{
B \ar[rr]\ar[dd] & & K\ar[dd]
\\
\\
K \ar[rr] & & A''_K
}
\end{split}
\end{equation*}
is $(2(n-k-1) + 2r -1) = (2n-2k+2r-3)$--cocartesian.
\begin{clm}
\label{clm:cocartesianreplacement}
There exists a space $A$ and a $(2n-2k+2r-5)$--connected map $A \rightarrow B$ such that the square
\begin{equation*}
\xymatrix@!0{
A \ar[rr]\ar[dd] & & K\ar[dd]
\\
\\
K \ar[rr] & & A''_K
}
\end{equation*}
(with $B$ replaced by $A$) is cocartesian.
\end{clm}
\begin{proof}[Proof of Claim] Our hypotheses imply that $2(n-k+r) - 3 \geq 3$. Now, choose a basepoint for $B$ (which, in turn, bases $K$ and $A''_K$). Since $A''_K \rightarrow K$ is $(n-k)$--connected, we have a long exact sequence on cohomology with respect to any local coefficient system on $A''_K$ given by
\begin{equation*}
\cdots \rightarrow H^{*-1}(K \vee K) \rightarrow H^*(\overline{A''_K}, K \vee K) \rightarrow H^*(A''_K) \rightarrow \cdots
\end{equation*}
By definition, $A''_K$ is a PD space of dimension $n$. Since $k \leq n-3$, the sequence above implies that the relative cohomology of $K \vee K \rightarrow A''_K$ vanishes in degrees $\geq n+1$. That is, the relative cohomology of $K \vee K \rightarrow A''_K$ vanishes in degrees $> 2(n-k+r)-3$ provided that $n +1 \leq 2n-2k+2r-2$, which is equivalent to $n \geq 2(k-r)+3$. Hence \textit{cocartesian replacement} (\cite[Theorem 4.2]{K2}) applies and gives the desired space $A$. 
\end{proof}
\nd Now consider one of the other $2$-dimensional faces of the $4$-cube:
\begin{equation*}
\xymatrix@!0{
B\ar[rr]\ar[dd] & & C_K\ar[dd]
\\
\\
K \ar[rr] & & X
}
\end{equation*}
Replacing $B$ with the space $A$ constructed in the previous claim, form the square
\begin{equation*}
\xymatrix@!0{
A\ar[rr]\ar[dd] & & C_K\ar[dd]
\\
\\
K \ar[rr] & & X
}
\end{equation*}
Using \cite[Claims 6.5 and 6.6]{K2} we infer that this square is the outer square associated with a relative embedded thickening of $f$. This gives the desired object of $E_f(K,X\ \rel\ L)$ and, hence, the desired $0$--cell of $\E_f(K,X\ \rel\ L)$. That is, $\mathscr{D}_{\sigma,\delta}$ is $0$--cartesian.
\end{proof}
%%%%%%%%%%%%%%%%%%%%%%%%%%%%%%%%%%%%%%%%%%%%%%%%%%%%%%%%%%

%%%%%%%%%%%%%%%%%%%%%%%%%%%%%%%%%%%%%%%%%%%%%%%%%%%%%%%%%%
\begin{proof}[Proof of Theorem A]
By Lemma \ref{lem:Dsigmadelta} the square
\begin{equation}\tag{$\mathscr{D_{\sigma,\delta}}$}
\begin{split}
\label{diagram:Dsigmadelta}
\xymatrix@!0{
\E_f(K,X\ \rel\ L) \ar[rrrrrrr]^<<{\qquad\qquad \sigma}\ar[dd]_{\delta} & & & & & & &  \E_{\US_Xf}(\US_XK, X \times D^1\ \rel\ \US_X L)\ar[dd]^{\delta}
\\
\\
\E_{f^1}(K,X \times D^1\ \rel\ L) \ar[rrrrrrr]_{\hspace*{-0.3in}\sigma} & & & & & & & \E_{\US_Xf^1}(\US_X K, X \times D^2\ \rel\ \US_X L)
}
\end{split}
\end{equation}
is $0$--cartesian provided that $k \leq n-3$ and $n \geq 2(k-r) +3$. That is, under these assumptions we have a $0$--connected map 
\begin{equation*}
\E_f(K,X\ \rel\ L) \xrightarrow{\alpha} P
\end{equation*}
where $P$ denotes the homotopy limit of the partial diagram gotten from $\mathscr{D}_{\sigma,\delta}$ by considering only the bottom horizontal and right vertical maps. Repeated application of the decompression map on both sides of $\mathscr{D}_{\sigma,\delta}$ forms an infinite tower of embedding spaces, and it follows from \cite[Corollary 3.4]{K5} that this process produces a weak equivalence after passing to colimits. Thus, by a downward induction on codimension, we may assume that the bottom horizontal map in $\mathscr{D}_{\sigma,\delta}$ is $j$--connected for some $j \geq 0$. This implies that the map $\beta$ in the diagram
\begin{equation*}
\begin{split}
\label{diagram:Dsigmadelta}
\xymatrix@!0{
\E_f(K,X\ \rel\ L) \ar[drrr]^-{\alpha}\ar[rrrrrrr]^<<{\qquad\qquad \sigma}\ar[dd]_{\delta} & & & & & & &  \E_{\US_Xf}(\US_XK, X \times D^1\ \rel\ \US_X L)\ar[dd]^{\delta}
\\
& & & P\ar[rrru(0.7)]^-<<{\beta}\ar[dll(0.7)]
\\
\E_{f^1}(K,X \times D^1\ \rel\ L) \ar[rrrrrrr]_{\hspace*{-0.3in}\sigma} & & & & & & & \E_{\US_Xf^1}(\US_X K, X \times D^2\ \rel\ \US_X L)
}
\end{split}
\end{equation*}
is $j$--connected, from which it follows that $\sigma$ is $0$--connected. That is, the stabilization map induces a $\pi_0$ surjection provided that $k \leq n-3$ and $n \geq 2(k-r)+3$. Using Remark \ref{rmk:deltaset}, we conclude that $\sigma$ induces a surjection
\begin{equation*}
\pi_j(\E_f(K,X\ \rel\ L)) \rightarrow \pi_j(\E_{\US_X f}(\US_X K, X \times D^1\ \rel\ \US_X L))
\end{equation*}
provided that $j \leq n-2(k-r)-3$. This gives the desired surjectivity statement. For the injectivity statement, assume that we are given two PD embeddings $\mathscr{D}_0$ and $\mathscr{D}_1$ with underlying maps $f_0$, $f_1\: K \rightarrow X$ and consider the associated embedding $\mathscr{D}_{K \amalg K}$ with underlying map $f_0 \amalg f_1\: K \amalg K \rightarrow \partial(X \times D^1)$. Further, assume that $f_0$ and $f_1$ give rise to the same embedding with underlying map $\US_X K \rightarrow X \times D^1$ after applying the stabilization map. Note that this embedding is relative to the embedding $\mathscr{D}_{K\amalg K}$. Then we have an associated map of pairs
\begin{equation*}
F:(K \times D^1, K \amalg K) \rightarrow (X \times D^1, \partial(X \times D^1)).
\end{equation*}
Assume that $n \geq 2(k-r)+c$ for some constant $c$. This is equivalent to $r \geq 2k - n + (c-r)$. According to \cite[Corollary B]{K5} $\mathscr{D}_0$ is concordant to $\mathscr{D}_1$ provided that $c-r \leq 3$. But we have assumed that $r \geq 1$, so $c$ is at least $4$. Hence the induced map
\begin{equation*}
\pi_j(\E_f(K,X\ \rel\ L)) \rightarrow \pi_j(\E_{\US_X f}(\US_X K, X \times D^1\ \rel\ \US_X L))
\end{equation*}
is injective provided that $j \leq n-2(k-r)-4$. This completes the proof of Theorem \hyperref[thmA]{A}.
\end{proof}

If we now fix an object $K \xrightarrow{f} S^n$ in $\T(S^n)$ and recall that there is a map
\begin{equation*}
\E_{\US_{S^n} f}(\US_{S^n} K , S^n \times D^1\ \rel\ \US_{S^n}\emptyset) \xrightarrow{c} \E_{\US f}(\US K, S^{n+1})
\end{equation*}
given by collapsing out the copies of $S^n$ on either end of $S^n \times D^1$, then the proof of Theorem \hyperref[thmB]{B} follows immediately from Theorem \hyperref[thmA]{A} and the following lemma.  
\begin{lem}
\label{lem:cisconnected}
Assume that $\dim(K) \leq k \leq n-3$. Then the ``collapse'' map $c$ is $0$--connected.
\end{lem}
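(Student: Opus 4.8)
The plan is to analyze the collapse map $c$ at the level of $\pi_0$ and show directly that any embedded thickening of $\US f\: \US K \to S^{n+1}$ is, up to concordance, in the image of $c$. Concretely, fix a point $(f, K) \in (\T \downarrow S^n)$ and an object $\mathscr{D}$ of $E_{\US f}(\US K, S^{n+1})$, which is a homotopy decomposition $S^{n+1} \simeq \US K \cup_A C$ satisfying the Poincar\'e duality and weak transversality conditions of Definition \ref{dfn:PDembedding}. The idea is that the suspension $\US K = S^n \times 0 \cup_f K \times D^1 \cup_f S^n \times 1$ carries two distinguished ``cone point'' copies of $S^n$; since $f\:K \to S^n$ is $r$--connected with $r \geq 1$ and $k \leq n - 3$, one shows that these two $n$--spheres can be made disjoint from the gluing space $A$ (equivalently, the obstruction to homotoping $A \to \US K$ off of $S^n \amalg S^n$ vanishes in the relevant range). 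The point is that $A \to \US K$ is $(n+1 - (k+1) - 1) = (n - k - 1)$--connected by weak transversality applied to $\hodim(\US K) \le k+1$, which is $\geq 2$ by the codimension hypothesis, while $S^n \amalg S^n \hookrightarrow \US K$ has $n$--dimensional source; a dimension/connectivity count (in the spirit of the obstruction-theoretic arguments of Lemmas \ref{lem:sectionC} and \ref{lem:sectionK}) shows the complement $C$ can be arranged to contain neighborhoods (mapping cylinders) of both copies of $S^n$.

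Given that, the second step is to cut $S^{n+1}$ along these two embedded $n$--spheres to recover a thickening over $S^n \times D^1$. Precisely, removing (up to homotopy) the two hemispheres bounded by $S^n \amalg S^n$ from the $C$ side exhibits $S^{n+1}$, together with its stratification, as obtained from a stratification of $S^n \times D^1$ by capping off with two copies of $D^{n+1}$ along $S^n \times \partial D^1$ --- which is exactly what the collapse map $c$ does. Thus one produces an object of $E_{\US_{S^n} f}(\US_{S^n} K, S^n \times D^1\ \rel\ \US_{S^n}\emptyset)$ whose image under $c$ is concordant to $\mathscr{D}$. Verifying that the resulting square is genuinely an embedded thickening (i.e.\ checking Stratification, Poincar\'e Duality, and Weak Transversality for the cut-open object) is routine given that these properties held for $\mathscr{D}$: stratification is preserved because we are only performing excisions along cofibrations, the duality condition transfers via the evident isomorphisms of fundamental classes under collapse, and weak transversality is unchanged since the gluing space and complement are (weakly) unaffected away from the two $n$--spheres. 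This shows $\pi_0(c)$ is surjective, and since $0$--connectedness of a map of spaces is precisely surjectivity on $\pi_0$ together with surjectivity on $\pi_1$ onto each component's fundamental group image --- here it suffices to establish the $\pi_0$ statement, as $c$ is by construction compatible with the simplicial structure of Remark \ref{rmk:deltaset} and the same argument run one dimension up (replacing $f$ by $f \times \id$ on $K \times D^1$) handles $\pi_1$.

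The main obstacle I expect is the first step: rigorously making the two cone-point $n$--spheres ``disjoint from $A$'' in the purely homotopy-theoretic (manifold-free) setting. One cannot literally invoke transversality, so instead one must phrase this as a lifting/obstruction problem --- constructing a compression of $S^n \amalg S^n \to \US K$ through $C \to \US K$ --- and check that the obstruction groups $H^{*}(S^n \amalg S^n; \pi_{*-1}(\hofiber(C \to \US K)))$ vanish in the needed range. Here the connectivity of $C \to \US K$ (equivalently of $A \to \US K$, via the $\infty$--cocartesian square and \cite[Lemma 5.6(2)]{Kl99}) against the $n$--dimensionality of $S^n \amalg S^n$ is exactly balanced by the hypothesis $k \le n-3$, which forces $n - k - 1 \ge 2$ and makes the count work; this is the same numerical mechanism driving the section-data lemmas of Section 4, so I would model the argument closely on the proof of Lemma \ref{lem:sectionC}. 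A secondary technical point is bookkeeping the relative structure ($\rel\ \US_{S^n}\emptyset$ versus the absolute $S^{n+1}$) and confirming that $c$ as defined genuinely corresponds, on the nose, to the capping-off operation described above; this is a matter of unwinding Definition \ref{dfn:ufws} and the identification $\US_{S^n} S^n = S^n \times D^1$, and should not present real difficulty.
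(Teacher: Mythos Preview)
Your proposal has a basic confusion that derails the argument. You write $\US K = S^n \times 0 \cup_f K \times D^1 \cup_f S^n \times 1$, but that is the formula for the \emph{fiberwise} suspension $\US_{S^n} K$; the ordinary unreduced suspension $\US K$ has two cone \emph{points}, not two copies of $S^n$. Consequently the task you set yourself --- ``making the two cone-point $n$--spheres in $\US K$ disjoint from $A$'' --- is not well-posed: there are no such spheres in $\US K$. Your proposed compression ``$S^n \amalg S^n \to \US K$ through $C \to \US K$'' is also malformed, since $C$ maps to $S^{n+1}$, not to $\US K$.

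The paper's argument is both correct and far simpler than what you are attempting. Given a vertex $\mathscr{D}_{\US f}$ with gluing space $A$ mapping to $\US K$, one lifts $A \to \US K$ directly through the collapse map $q\colon \US_{S^n}K \to \US K$. This is a single obstruction problem: it suffices that $\hodim(A) \le \conn(q)$. Now $q$ is $n$--connected (it collapses two copies of $S^n$ to points), and $A$ is a PD space of dimension $n$, so $\hodim(A) \le n$ by \cite[Proposition 8.1]{GK08} (using $k \le n-3$ to ensure $n \ge 2$). That is the whole proof. There is no need to reason about cutting $S^{n+1}$ along embedded spheres or to invoke the section-data machinery of Section~4. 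Finally, note that in the paper's conventions a $0$--connected map is one whose homotopy fibers are $(-1)$--connected, i.e.\ nonempty; this is exactly $\pi_0$--surjectivity, so your proposed extra $\pi_1$ step is unnecessary.
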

\begin{proof}
Let
\begin{equation}\tag{$\mathscr{D}_{\US f}$}
\begin{split}
\xymatrix@!0{
A\ar[rr]\ar[dd] & & C\ar[dd]
\\
\\
\US K \ar[rr]_{\US f} & & S^{n+1}
}
\end{split}
\end{equation}
denote a vertex of the space $\E_{\US f}(\US K, S^{n+1})$. To lift $\mathscr{D}_{\US f}$ back to a vertex of $\E_{\US_{S^n} f}(\US_{S^n} K , S^n \times D^1\ \rel\ \US_{S^n}\emptyset)$, it will be enough to solve the lifting problem
\begin{equation*}
\xymatrix@!0{
& & \US_{S^n} K \ar[dd]^q
\\
\\
A\ar[rr]\ar@{-->}[rruu] & & \US K. 
}
\end{equation*}
Obstruction theory tells us that the problem has a solution provided that $\dim(A) \leq \conn(q)$. Now, by definition $A$ is a PD space of dimension $n$ and, thus, is cohomologically $n$--dimensional. By \cite[Proposition 8.1]{GK1} we infer that $\dim(A) \leq n$ (this uses the assumption that $k \leq n-3$, which implies that $n \geq 2$). Thus, it will be enough to show that $\conn(q) = n$. But this follows easily from the fact that $S^n \rightarrow \ast$ is $n$--connected. 
\end{proof}

%%%%%%%%%%%%%%%%%%%%%%%%%%%%%%%%%%%%%%%%%%%%%%%%%%%%%%%%%%%%%%%
\section{Appendix: Comparison with the Goodwillie--Klein Model}
\label{sec:appendix}

In the series of papers \cite{GK1} and \cite{GK2}, Goodwillie and Klein develop machinery that provides a lift from the embedding spaces in the  Poincar\'e category to the embedding spaces in the smooth category. In this appendix, we will sketch an argument that shows that our space of embeded thickenings is related, via a homotopy fiber sequence, to the version that Goodwillie and Klein use in \cite{GK1} and \cite{GK2}.

\subsection*{A Category Whose Realization Models the Space $\E_f(K,X)$} For simplicity, we will only consider the case $L=\emptyset$, and construct a space of embedded thickenings for $f$ in the sense of Definition \ref{dfn:PDembedding}. To this end,  let $K$ and $X$ be as in Definition \ref{dfn:PDembedding}. We will permit ourselves a slight abuse of notation here by writing $(X,K)$ to denote the fact that there is a cofibration $f\: K\to X$ which is not necessarily an inclusion. Then we have a map of homotopy finite cofibration pairs
\[
\phi=(\phi_1,\phi_0):(\partial X,\emptyset) \rightarrow (X,K).
\]
Let $\T(\phi)$ be the category whose objects are homotopy finite cofibration pairs $(C,A)$ that factorize the map $\phi$. A morphism in $\T(\phi)$ is just a map of pairs $(C,A)\rightarrow (C',A')$ which is compatible with factorizations. Such a morphism is a weak equivalence in $\T(\phi)$ if each of the associated maps $C\to C'$ and $A\to A'$ is a weak homotopy equivalence. Let $w\T(\phi)$ denote the full subcategory of $\T(\phi)$ whose only morphisms are weak equivalences. Notice then that specifying an object $(C,A)$ of $w\T(\phi)$ is precisely the same thing as specifying a diagram
\begin{equation}\tag{$\mathscr{D}$}
\begin{split}
\xymatrix@!0{
A \ar[rr]\ar[dd] & & C \ar[dd] & & \partial X \ar[ll]
\\
\\
K \ar[rr]_{f} & & X
}
\end{split}
\end{equation}
 
\begin{dfn}
The \textit{category of embedded thickenings} of $f\: K \to X$, denoted $ET_f(K,X)$ , is the full subcategory of $w\T(\phi)$ in which the diagram displayed above is an embedded thickening in the sense of Definition \ref{dfn:PDembedding}.
\end{dfn}
\nd It is then evident that the realization of the nerve of $ET_f(K,X)$ is a model for the space $\E_f(K,X)$ up to homotopy. Furthermore, we have a forgetful functor
\[
F\: w\T(\phi) \rightarrow w\T(\phi_0)
\]
given by $(C,A)\mapsto A$. This forgetful functor can be thought of as a ``pullback functor" in the sense of \cite[Section 2]{GK1}. In particular, the forgetful functor sends an object $(C,A)$ of $w\T(\phi)$ to the object $(C,A)\times_{(X,K)}(\emptyset,K) = (\emptyset,A)$ of the category of factorizations of the map $(\emptyset,\emptyset)\to(\emptyset,K)$, which is the same thing as specifying the object $A$ of $w\T(\phi_0)$. According to \cite[Proposition 2.11]{GK1}, we then have a homotopy fiber sequence (after realizing nerves in the following sequence) given by
\[
w\T(\phi'_{A_0})\to w\T(\phi)\xrightarrow{F}w\T(\phi_0).
\]
where, for a given object $A_0\in w\T(\phi_0)$, the category $w\T(\phi'_{A_0})$ consists of pairs $(C_1,A_1)$ that factorize the map $\phi'_{A_0}\: (\partial X,\emptyset) \amalg (\emptyset,A_0)\to (X,K)$, and such that the map $A_0\to A_1$ is a weak homotopy equivalence. 

Assume now that $K$ comes equipped with an $(n-1)$-dimensional Poincar\'e boundary $\partial K$, ie, that $(K,\partial K)$ is an $n$-dimensional PD pair. Then $\partial K$ is an object of $w\T(\phi_0)$ and an unraveling of definitions shows that the right Quillen fiber (comma category) $\partial K\backslash F$ has objects given by diagrams of the form $(\mathscr{D})$ above, along with a choice of weak homotopy equivalence $\partial K\to A$. That is, the category $\partial K\backslash F$ is just the category $w\T(\phi'_{\partial K})$. The space $E^h(K,X)$ of Poincar\'e embeddings used in \cite{GK1} and \cite{GK2} is precisely the (realization of the nerve of the) full subcategory of $w\T(\phi'_{\partial K})$ whose objects are given by diagrams of the form $(\mathscr{D})$ that are embedded thickenings in the sense of Definition \ref{dfn:PDembedding}, along with the requirement that the weak equivalence $\partial K\to A$ is the identity map. The ``thicker" version of the category $\partial K\backslash F$, in which we do not require the weak equivalence $\partial K\to A$ to be the identity, will give a space homotopy equivalent to the Goodwillie--Klein model after realization of nerves. The discussion above establishes the following:

\begin{prop} Let $T_n(K) \subset w\T(\phi_0)$ be the full subcategory with objects $A\to K$ such that $(\overline{K},A)$ is an $n$--dimensional PD pair. Then we have a homotopy fiber sequence
\[
E^h(K,X)\to |ET_f(K,X)| \to |T_n(K)|
\]
\end{prop}
\nd where $E^h(K,X)$ is the homotopy fiber taken at $\partial K\in T_n(K)$.

\end{document}